\newtheorem{theorem}{Theorem}[section]
\newtheorem{corollary}[theorem]{Corollary}
\newtheorem{lemma}[theorem]{Lemma}
\newtheorem{definition}[theorem]{Definition}
\newtheorem{remark}[theorem]{Remark}
\newcommand{\inclusion}{\hookrightarrow}
\newcommand{\nonsep}{\mathcal{N}}
\newcommand{\lis}{locally injective simplicial }
\DeclareMathOperator{\mcgo}{Mod}
\DeclareMathOperator{\mcg}{Mod^\pm}
\DeclareMathOperator{\aut}{Aut}
\author{Rodrigo De Pool}
\address{Instituto de Ciencias Matem\'aticas (ICMAT). Madrid, Spain}
\email{rodrigo.depool@icmat.es}
\title{Finite rigid sets of the non-separating curve complex}
\begin{document}

\begin{abstract}
  We prove that the non-separating curve complex of every surface of  finite type and genus at least three admits an exhaustion by finite rigid sets.
\end{abstract}

\maketitle

\section{Introduction}

Let $S$ be a connected, orientable, finite-type surface. The \emph{curve complex} is the simplicial complex $\mathcal{C}(S)$  whose $k$-simplices are sets of to $k+1$ distinct isotopy classes of essential simple curves on $S$ that are pairwise disjoint.

The extended mapping class group, denoted $\mcg(S)$, acts naturally on the set of curves up to isotopy on $S$. This  action preserves  disjointness of curves, and therefore  extends to an action on the complex $\mathcal{C}(S)$. Via this action, the curve complex  works as a combinatorial model to study properties of $\mcg(S)$. For instance, a celebrated theorem of Ivanov in \cite{ivanov_automorphism_1997} asserts that for sufficiently complex surfaces the group $\mcg(S)$ is isomorphic to the group of simplicial automorphisms of the curve complex, a result  commonly known as \emph{simplicial rigidity.} In turn, this result is a key ingredient in establishing the isomorphism $\aut(\mcg(S))\cong \mcg(S)$.

The curve complex, and its applications to the mapping class group, has motivated the study of similar complexes associated to surfaces. For example, simplicial rigidity has been established for: the arc complex \cite{irmak_injective_2010}; the non-separating curve complex  \cite{irmak_complexes_2006}; the separating curve complex \cite{brendle_commensurations_2004}, \cite{kida_automorphisms_2011}; the Hatcher-Thurston complex \cite{irmak_automorphisms_2007}; and the pants graph \cite{margalit_automorphisms_2004} (see \cite{papadopoulos_simplicial_2012} for a survey on complexes associated to surfaces).

Another notion of rigidity which has been of recent interest is that of \emph{finite rigidity}: the simplicial complex $\mathcal{C}(S)$ is said to be finitely rigid if there exists a finite subcomplex $X$ such that any locally injective simplicial map \[\phi:X\to \mathcal{C}(S)\]is induced by a \emph{unique} mapping class, that is, there exists a unique  $h\in \mcg(S)$ such that the simplicial action $h:\mathcal{C}(S)\to \mathcal{C}(S)$ satisfies $h|_X = \phi$. Such $X$ is called a \emph{finite rigid set} of $\mathcal{C}(S)$ with \emph{trivial pointwise stabilizer}.

The finite rigidity of the curve complex was proven by Aramayona and Leininger in \cite{aramayona_finite_2013}, thus answering a question by Lars Louder. Furthermore, they constructed in \cite{aramayona_exhausting_2016} an exhaustion of $\mathcal{C}(S)$ by finite rigid sets with trivial pointwise stabilizers, thus recovering Ivanov's result \cite{ivanov_automorphism_1997} on the simplicial rigidity of $\mathcal{C}(S)$.

Following the result of Aramayona and Leininger,  finite rigidity has been proven for other complexes:  Shinkle proved it for the arc complex \cite{shinkle_finite_2020}   and the flip graph \cite{shinkle_finite_2022}; Hernández, Leininger and Maungchang proved a slightly different notion for the pants graph \cite{hernandezhernandez_finite_2021}, \cite{maungchang_finite_2018}; and Huang and Tshishiku proved a weaker notion for the separating curve complex \cite{huang_finite_2021}.

The main goal of this article is to prove the finite rigidity of the \emph{non-separating curve complex}, $\nonsep(S)$, which is the subcomplex of $\mathcal{C}(S)$ spanned by the non-separating curves.  To prove the finite rigidity of $\nonsep(S)$, one would like to  restrict the ﬁnite rigid set of $\mathcal{C}(S)$ in \cite{aramayona_finite_2013} to $\nonsep(S)$, however it is not clear why this restriction would yield a finite rigid set in $\nonsep(S)$ as their proof uses separating curves in a  fundamental way. Below, we construct a different subcomplex of $\nonsep(S)$ and prove its rigidity.

Our main result is compiled in the next theorem.

\begin{theorem}\label{thm:fr}
  Let $S$  be a connected, orientable, finite-type surface of genus $g\geq 3$. There exists a finite simplicial complex $X\subset \nonsep(S)$ such that any locally injective simplicial map \[\phi:X \to \nonsep(S)\] is induced by a unique  $h\in \mcg(S)$.
\end{theorem}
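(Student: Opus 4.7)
The plan is to adapt the strategy of Aramayona--Leininger from the full curve complex $\mathcal{C}(S)$ to the non-separating curve complex $\nonsep(S)$. As noted in the introduction, their finite rigid set for $\mathcal{C}(S)$ does not restrict to a finite rigid set of $\nonsep(S)$ because their argument crucially uses separating curves (in particular, boundaries of regular neighborhoods of chains) that are absent from $\nonsep(S)$. The new construction and the rigidity analysis must therefore be carried out intrinsically within non-separating curves.

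I would build $X$ as the full subcomplex of $\nonsep(S)$ spanned by a carefully chosen finite set $\mathcal{X}\subset \nonsep(S)$. A natural starting point is a chain $a_0, a_1, \ldots, a_{2g}$ of non-separating curves with $i(a_i, a_{i+1}) = 1$ and $i(a_i, a_j) = 0$ for $|i-j| \geq 2$; such a chain fills the closed genus $g$ subsurface. Augment $\mathcal{X}$ with further non-separating curves so that: (i) each disjointness relation $i(a,b)=0$ and each intersection relation $i(a,b)=1$ for $a,b \in \mathcal{X}$ can be detected from the link structure of $X$ inside $\nonsep(S)$; (ii) the union $\bigcup_{c \in \mathcal{X}} c$ fills $S$, with all complementary components being disks or once-punctured disks; and (iii) for each boundary component or puncture there are enough non-separating curves in $\mathcal{X}$ to pin down its position. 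Condition~(ii) ensures that the pointwise stabilizer of $X$ in $\mcg(S)$ is trivial, which will yield the uniqueness of $h$.

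The heart of the argument is to show that any \lis map $\phi:X\to \nonsep(S)$ preserves the combinatorial type of the augmented chain. The key technical lemma would be: for any pair $\{a,b\}\subset \mathcal{X}$ with $i(a,b)=1$, the images satisfy $i(\phi(a),\phi(b))=1$; analogous statements must hold for the auxiliary configurations used to pin down boundary positions. Once these are established, a change-of-coordinates argument produces a mapping class $h\in \mcg(S)$ sending each $a_i$ to $\phi(a_i)$, and the augmentation curves together with local injectivity force $h|_X = \phi$ on all of $\mathcal{X}$. Uniqueness follows immediately from the filling property~(ii) via the standard Alexander-method fact that a mapping class fixing each curve in a filling collection must be trivial.

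The main obstacle is clearly the intrinsic characterization of intersection number one. In $\mathcal{C}(S)$, a pair $\{a,b\}$ with $i(a,b)=1$ is detected through the separating curve $\partial N(a\cup b)$, which appears as a vertex with a prescribed link. In $\nonsep(S)$ this vertex is unavailable and must be replaced by a characterization using only non-separating curves, likely phrased in terms of the joint link of $a$ and $b$ and certified by the presence of specific configurations of non-separating curves in the complement of $N(a\cup b)$. The hypothesis $g\geq 3$ enters precisely here, since the complement of such a regular neighbourhood then contains a subsurface of genus at least two, which hosts enough non-separating curves to serve as distinguishing witnesses. Designing the augmentation $\mathcal{X}$ so that this combinatorial detection goes through for every required pair, while keeping $X$ finite, is what I expect to be the most delicate part of the proof.
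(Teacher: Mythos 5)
You take a genuinely different route from the paper, and your route leaves its key step unresolved. You propose a direct adaptation of Aramayona--Leininger: build $X$ around a chain $a_0,\dots,a_{2g}$ with $i(a_i,a_{i+1})=1$, augment so that the curves fill $S$, and show that any \lis map preserves both disjointness and intersection number one. You correctly identify the obstruction: in $\mathcal{C}(S)$ one certifies $i(a,b)=1$ via the separating boundary $\partial(a,b)$ and its link, but that vertex is absent from $\nonsep(S)$. You do not, however, supply a replacement; you describe it only as ``likely phrased in terms of the joint link'' and defer it as the most delicate part. That deferred step \emph{is} the proof: without a concrete finite configuration of non-separating curves whose combinatorics forces $i(\phi(a),\phi(b))=1$ for every relevant pair, the argument does not close, and it is not obvious that such a link-theoretic certificate exists in $\nonsep(S)$.

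The paper avoids the issue entirely by choosing a different combinatorial skeleton. The finite rigid set $F_R$ is built around a \emph{pants decomposition} $P$ of $S$ (a maximal collection of pairwise-disjoint non-separating curves) together with auxiliary curves $C$, $U\cup D$, $L\cup R$, $N$. The invariant shown to be preserved is not intersection number one but \emph{adjacency rel to $P$}. Non-adjacency is certified purely via disjointness patterns by a substitution criterion (Lemma~\ref{lemma:non_adj}), and adjacency then follows by a counting argument (Remark~\ref{rmk:at_least_adj_curves}); this pins down the topological type of $\phi(P)$ (Corollary~\ref{lemma:fr_top_typ}). The remaining curves are then located one at a time, with each image trapped in an annulus cut out by already-fixed curves and arcs, and one composes with Dehn twists along $P$ to match $\phi$. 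At no point is intersection number one detected --- only ``intersects'' versus ``disjoint,'' together with which arcs land in which complementary regions. Both approaches end with the Alexander method for uniqueness, but the pants-decomposition skeleton is what lets the argument proceed intrinsically in $\nonsep(S)$ without confronting the characterization problem your sketch punts on.
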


Our second result produces an exhaustion of the non-separating curve complex by finite rigid sets:

\begin{theorem}\label{thm:exh}
  Let $S$  be an orientable finite-type surface of genus $g\geq 3$. There exist subcomplexes $X_1\subset X_2 \subset \dots \subset \nonsep(S)$ such that \[\bigcup_{i=1}^\infty X_i = \nonsep(S)\] and each $X_i$ is a finite rigid  set with trivial pointwise stabilizer. 
\end{theorem}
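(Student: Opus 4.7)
The plan is to follow the exhaustion template used by Aramayona and Leininger \cite{aramayona_exhausting_2016} for the full curve complex: use the finite rigid set $X \subset \nonsep(S)$ supplied by Theorem \ref{thm:fr} as a building block, and exhaust $\nonsep(S)$ by nested finite unions of $\mcg(S)$-translates of $X$.

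First, I would enlarge $X$, keeping it finite, to a subcomplex $\tilde X \subset \nonsep(S)$ that is still finite rigid with trivial pointwise stabilizer and that in addition contains a representative of every $\mcg(S)$-orbit of simplex of $\nonsep(S)$. This is possible since there are only finitely many orbits of simplices in each fixed dimension, and enlarging a rigid set with trivial pointwise stabilizer preserves both properties. Enumerate the simplices $\sigma_1, \sigma_2, \ldots$ of $\nonsep(S)$ (it is a countable complex) and, for each $\sigma_j$, pick $h_j \in \mcg(S)$ with $\sigma_j \in h_j \tilde X$. Setting
\[
X_i := \bigcup_{j=1}^{i} h_j \tilde X,
\]
the complexes $X_i$ are finite and nested, and their union contains every simplex of $\nonsep(S)$, hence equals $\nonsep(S)$.

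The remaining content is the rigidity of each $X_i$, which I would prove by induction on $i$. Given a locally injective simplicial map $\phi\colon X_i \to \nonsep(S)$, the inductive hypothesis produces a unique $g \in \mcg(S)$ with $\phi|_{X_{i-1}} = g|_{X_{i-1}}$, while applying Theorem \ref{thm:fr} to the locally injective simplicial map $y \mapsto \phi(h_i y)$ on $\tilde X$ produces a unique $g' \in \mcg(S)$ with $\phi|_{h_i \tilde X} = g'|_{h_i \tilde X}$. On the overlap $X_{i-1} \cap h_i \tilde X$ the mapping class $g^{-1}g'$ then fixes every vertex, so provided the overlap has trivial pointwise stabilizer, $g = g'$ and $\phi$ is induced by a single mapping class on all of $X_i$; uniqueness is inherited from the trivial pointwise stabilizer of $X_{i-1}$.

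The main obstacle, and the crux of the argument, is to guarantee that the overlap $X_{i-1} \cap h_i \tilde X$ has trivial pointwise stabilizer at every step. A single shared non-separating curve is far from sufficient, and in general nothing forces consecutive translates to overlap in a rigid configuration. To overcome this I would not exhaust simplex-by-simplex in the naive order, but enlarge the construction adaptively: at step $i$, before adjoining $h_i \tilde X$, insert a finite chain of auxiliary translates of $\tilde X$ bridging $X_{i-1}$ to $h_i \tilde X$, each new translate overlapping the previous union in a subcomplex rich enough to have trivial pointwise stabilizer in $\mcg(S)$ (for instance, containing a collection of non-separating curves whose joint stabilizer is trivial, such as a cut system of $S$). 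The existence of such bridging sequences should follow from the transitivity of $\mcg(S)$ on suitable configurations of non-separating curves, together with the explicit description of $\tilde X$ inherited from the proof of Theorem \ref{thm:fr}. Once this combinatorial input is in place, the inductive gluing above runs without change, and both finiteness and the exhaustion property are trivially preserved.
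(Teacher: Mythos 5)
Your framework --- exhaust $\nonsep(S)$ by translates of a finite rigid set and glue inductively on overlaps --- is in the right spirit, but there are two genuine gaps, and the second is where the content of the paper lies. First, you assert that enlarging a finite rigid set with trivial pointwise stabilizer preserves both properties; this is false, and the paper records a Remark in Section 3 with an explicit counterexample (two disjoint non-separating curves with connected complement map to two disjoint ones with disconnected complement under a locally injective simplicial map that no mapping class induces). So neither the existence of your $\tilde{X}$ nor its rigidity is automatic; the paper's own enlargement $F_1 = F_R \cup A$ requires a dedicated argument (Lemma \ref{lemma:enlarge_fr_compact}, and Lemma \ref{lemma:almostlast} for punctured surfaces) showing that each added curve is determined by the already-constrained $F_R$.

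Second, and more fundamentally, your bridging chain is posited but not constructed, and it does not dissolve the difficulty: each bridging translate would still need to overlap the running union in a trivially pointwise-stabilized subcomplex, which is exactly what you were trying to engineer in the first place. The paper resolves this cleanly via Lemma \ref{lemma:create_exh}: fix a finite generating set $\{h_1,\dots,h_k\}$ of $\mcg(S)$ and recursively set $F_{i+1} = F_i \cup \bigcup_j \bigl(h_j(F_i) \cup h_j^{-1}(F_i)\bigr)$. Then $F_1 \subset F_i \cap h_j(F_i)$ holds automatically from the recursion, so every overlap contains a trivially stabilized subcomplex by construction, and the entire exhaustion problem reduces to finitely many concrete checks: that $F_1 \cup h(F_1)$ is finite rigid with trivial pointwise stabilizer for each generator $h$. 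Those checks (Lemma \ref{lemma:fr1_as_in_lemm} and Lemma \ref{lemma:last}) are nontrivial --- one must rule out a residual Dehn-twist power $\delta_\alpha^k$ using explicit intersection-number computations --- and your proposal supplies no substitute for them.
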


From Theorem \ref{thm:exh} we can recover the simplicial rigidity of $\nonsep(S)$ (see \cite[Theorem 1.1]{irmak_edge-preserving_2020}):

\begin{corollary}
  Let $S$ be a connected, orientable, finite-type surface of genus $g\geq 3$. Any locally injective simplicial map $\phi: \nonsep(S)\to \nonsep(S)$ is induced by a unique $h\in \mcg(S)$. In particular, this yields an isomorphism between $\mcg(S)$ and  the group of simplicial automorphisms of $\nonsep(S)$.
\end{corollary}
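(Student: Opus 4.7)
The plan is to deduce the corollary as a direct consequence of Theorem \ref{thm:exh}. Given a locally injective simplicial map $\phi:\nonsep(S)\to\nonsep(S)$, I would first restrict $\phi$ to each $X_i$. Local injectivity is inherited by subcomplexes, since the star of any vertex in $X_i$ is contained in its star in $\nonsep(S)$, so each $\phi|_{X_i}:X_i\to\nonsep(S)$ is again locally injective and simplicial. Theorem \ref{thm:exh} then produces a unique $h_i\in\mcg(S)$ with $h_i|_{X_i}=\phi|_{X_i}$.

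The second step is to check that these mapping classes agree. For $i\leq j$, both $h_i$ and $h_j$ restrict to $\phi|_{X_i}$ on $X_i$, so $h_j^{-1}h_i$ fixes $X_i$ pointwise, and the trivial pointwise stabilizer of $X_i$ forces $h_i=h_j$. Thus there is a single $h\in\mcg(S)$ with $h|_{X_i}=\phi|_{X_i}$ for every $i$, and using $\bigcup_i X_i=\nonsep(S)$ we conclude that $h$ and $\phi$ coincide on every vertex of $\nonsep(S)$, hence as simplicial maps. Uniqueness of $h$ follows from the same argument: any other mapping class inducing $\phi$ agrees with $h$ on $X_1$, and the trivial pointwise stabilizer of $X_1$ then forces it to coincide with $h$.

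The second assertion is now formal: the natural homomorphism $\mcg(S)\to\aut(\nonsep(S))$ is surjective since every simplicial automorphism is in particular a locally injective simplicial self-map and thus induced by a (necessarily unique) mapping class, and it is injective because any element of $\mcg(S)$ acting trivially on $\nonsep(S)$ in particular fixes $X_1$ pointwise. There is no real obstacle in this argument; the only point requiring a brief justification is the passage from local injectivity on the full complex to local injectivity on each $X_i$, and the entire proof is essentially bookkeeping of the uniqueness clause of Theorem \ref{thm:exh} along the exhaustion.
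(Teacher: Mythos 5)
Your argument is correct and is the standard bookkeeping the paper leaves implicit: restrict $\phi$ to each $X_i$, invoke rigidity and the trivial pointwise stabilizer to make the resulting mapping classes agree along the nested exhaustion, and pass to the union. The paper gives no explicit proof of this corollary beyond citing Theorem~\ref{thm:exh}, so there is nothing to compare beyond noting that your write-up supplies exactly the intended argument, including the (easy but worth stating) observation that local injectivity and global bijectivity each restrict to subcomplexes.
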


\subsection*{Plan of the paper} In Section \ref{sec:preliminaries} we introduce some basic definitions that will be required. In Section \ref{sec:frs} we introduce the notion of finite rigid sets. Sections \ref{sec:fr_no_puncs} and \ref{sec:exh_no_puncs} deal with the proofs of Theorems \ref{thm:fr} and \ref{thm:exh} for closed surfaces. Lastly, sections \ref{sec:fr_puncs} and \ref{sec:exh_puncs} present the proofs of theorems \ref{thm:fr} and \ref{thm:exh} for punctured surfaces.

\subsection*{Acknowledgements}
The author would like to thank his supervisor Javier Aramayona for suggesting the problem and for the guidance provided. 

The author acknowledges financial support from grant CEX2019-000904-S funded by MCIN/AEI/ 10.13039/501100011033.

\section{Preliminaries}\label{sec:preliminaries}

  Let $S$ be connected, orientable surface without boundary. We will further assume that $S$ has finite type, i.e, $\pi_1(S)$ is finitely generated. As such, $S$ is homeomorphic to $S_{g,n}$, the result of removing $n$ points from a genus $g$ surface. We refer to the removed points as \emph{punctures}. If $S$ has no punctures, we will say $S$ is \emph{closed}. Otherwise, we will refer to $S$ as a \emph{punctured} surface.

  Before fervently jumping into the proofs, we warn the reader that the classification of surfaces, the change of coordinates principle and the Alexander method will be frequently used in proofs, sometimes without mention. For these and other fundamental results on mapping class groups, we refer the reader to  \cite{farb_primer_2012}.

  \subsection{Curves}
  By a curve $c$ in $S$ we will mean the isotopy class of an unoriented simple closed curve that does not bound a disk or a punctured disk. Throughout the article, we will  make no distinction  between  curves and their  representatives. We will say $c$ is \emph{non separating} if any  representative  $\gamma$  of $c$ has connected complement in $S$.

 The \emph{intersection number} $i(a,b)$  between two curves $a$ and $b$ is the minimum intersection number between representatives of $a$ and $b$. If $i(a,b)=0$, we will say the curves $a$ and $b$ are \emph{disjoint}. Given two representatives $\alpha \in a$ and $\beta \in b$, we say they are in \emph{minimal position} if $i(a,b)=|\alpha \cap \beta|$. A fact that will be often used is that for any set of curves we may pick a single representative for each curve, such that the representatives are pairwise in minimal position (see \cite[Chapter 1.2]{farb_primer_2012}).

  Given a set of curves $\{c_1,\dots, c_k\}$, consider representatives $\gamma_i \in c_i$ pairwise  in minimal position. We will denote a regular neighborhood of  $\bigcup \gamma_i$ by $N(\bigcup \gamma_i)$. The set of curves in the boundary of $N(\bigcup \gamma_i)$ will be denoted  
   \[\partial(c_1, \dots, c_k).\]
  We emphasize that implicit in the definition of $b\in \partial(c_1, \dots, c_k)$, is that $b$ is an isotopy class of a simple closed curve which does not bound a disk or a punctured disk.

  \subsection{Non-separating curve complex}
  The \emph{non-separating curve complex}  $\nonsep(S)$ is the  simplicial complex whose  $k$-simplices are sets of $k+1$ isotopy classes of pairwise disjoint curves.

  Note that we can endow $\nonsep(S)$ with a metric by declaring each $k$-simplex to have the standard euclidean metric and considering the resulting path metric on $\nonsep(S)$.

  \subsubsection{Pants decompositions}

  The dimension of $\nonsep(S_{g,n})$ is $3g-3+n$ and the vertex set of a top-dimensional simplex in  $\nonsep(S_{g,n})$ is called a \emph{non-separating pants decomposition} of $S_{g,n}$. If $P=\{c_1,\dots, c_k\}$ is a pants decomposition of $S$, then $S\setminus \bigcup c_i$  is a union of  \emph{pairs of pants} (i.e, a union of subsurfaces homeomorphic to $S_{0,3}$). 

  Let $P=\{c_1,\dots, c_k\}$ be a pants decomposition of the surface $S$. Two curves $c_i,c_j \in P$ are said to be \emph{adjacent rel to P} if there exists a curve $c_k \in P$ such that $c_i, c_j, c_k$ bound a pair of pants in $S$; see Figure \ref{fig:pant_decomp_nopuncs}  for an example. 

  We record the following observation for future use. 
  \begin{remark}\label{rmk:at_least_adj_curves}
    Let $P$ be a non-separating pants decomposition of $S_{g,n}$, where $g\geq 3$:

    \begin{itemize}
    \item  If $n\leq 1$, then every curve in $P$ is adjacent rel to $P$ to at least three other curves.
    \item If $n>1$, then every curve is adjacent to at least two other curves.
  \end{itemize}

  \end{remark}

  Consider $A \subset P$, where $P$ is a pants decomposition of  $S$. We say that a set of curves $\tilde{A}$ \emph{substitutes $A$ in $P$} if \[ (\tilde{A} \cup P) \setminus A \]  is a pants decomposition. In words, we say $\tilde{A}$ substitutes $A$ in $P$ if both sets have no curves in common and we can replace the curves in $A$ by the curves in $\tilde{A}$ and still get a pants decomposition.

  \section{Finite rigid sets}\label{sec:frs}

  For a simplicial subcomplex $X \subset \nonsep(S)$, a map $\phi: X \to \nonsep(S)$ is  said to be a \emph{locally injective simplicial map}  if $\phi$ is  simplicial and injective on the star of each vertex. A first elementary observation is the following:

  \begin{lemma}\label{lemma:elementary_pants_to_pants}
    Let $\phi:X\to \nonsep(S)$ be a locally injective simplicial map. If $P\subset X$ is a pants decomposition, then $\phi(P)$ is a pants decomposition.
  \end{lemma}
  \begin{proof}
    Take a vertex $p\in P$. Since $\phi$ is injective in the star of $p$ and $P$ is a simplex, then $\phi$ is injective on $P$. Thus, $\phi(P)$ is a maximal-dimensional simplex, i.e, $\phi(P)$ is a pants decomposition. 
  \end{proof}

  As mentioned in the introduction, the main goal of this article is to construct a finite subcomplex $X\subset \nonsep(S)$ with the following properties:

  \begin{definition}[Finite rigid set]
    A \emph{finite rigid set} $X$ of $\nonsep(S)$ is a finite subcomplex such that any \lis map $\phi: X \to \nonsep(S)$ is induced by a mapping class, i.e, there exists $h\in \mcg(S)$ with $h|_X = \phi$. 

    In addition, if $h$ is unique, we say $X$ has \emph{trivial pointwise stabilizer}.
  \end{definition}

  Observe that a subcomplex $X\subset \nonsep(S)$ has trivial pointwise stabilizer if and only if the inclusion $X \inclusion \nonsep(S)$ is induced uniquely by the identity $1\in \mcg(S)$, hence the name. 

  \begin{remark}
    By the change of coordinates principle (see \cite[Chapter 1.3]{farb_primer_2012}), every vertex $\{v\} \subset \nonsep(S)$ is a finite rigid set. However, the stabilizer of $\{v\}$ is not trivial.
  \end{remark}
  \begin{remark}
    If $X$ is a finite rigid set and $X\subset Y$, then $Y$ may not be a finite rigid set. For example:
     
    Consider two disjoint curves $v_1,v_2$ such that $S\setminus \bigcup v_i$ is connected, and two disjoint curves $v_1',v_2'$ such that $S\setminus \bigcup v_i'$ is disconnected. Now, take $X=\{v_1\}$, $Y=\{v_1,v_2\}$ and the \lis map $\phi(v_i)=v_i'$. Clearly, $\phi$ is not induced by a mapping class and so $Y$ is not a finite rigid set of $\nonsep(S)$. Note that $X$ is a finite rigid set of $\nonsep(S)$ by the remark above.
  \end{remark}

  Following Aramayona and Leininger in \cite{aramayona_finite_2013}, we will say that a subcomplex $X\subset \nonsep(S)$  \emph{detects the intersection} of two curves $a,b \in X$, if every \lis map $\phi: X \to \nonsep(S)$ satisfies 
\[i(a,b)\neq 0 \Leftrightarrow  i(\phi(a),\phi(b)) \neq 0.\]

\section{Finite rigid sets for closed surfaces}\label{sec:fr_no_puncs}
In this section we construct  finite rigid sets for closed surfaces and prove their rigidity. This will establish  Theorem \ref{thm:fr} for closed surfaces. 

\subsection{Constructing the finite rigid set}

Let $S$ be a closed surface of genus $g\geq 3$. We will start by defining the curves in the finite rigid set. The reader should keep figures \ref{fig:pant_decomp_nopuncs}, \ref{fig:circular_nopuncs}, \ref{fig:ctilde}, \ref{fig:upper}, \ref{fig:el_er} and \ref{fig:ed} in mind throughout the section.

Fix a set $\{p_1,\,c_1,\,\dots,\, p_g, \, c_g, \,p_{g+1}\}$ of non-separating curves such that $i(c_i,p_i)=i(c_i,p_{i+1})=1$ and the rest of the curves are pairwise disjoint  (see figures \ref{fig:pant_decomp_nopuncs} and \ref{fig:circular_nopuncs}). Such a set of curves is unique up to homeomorphism. Let $c_{g+1}$ be a curve such that  $i(p_1,\,c_{g+1})=i(p_{g+1},\, c_{g+1})=1$ and is disjoint from every other curve in the set above. We define \[C=\{c_1,\dots,c_{g+1}\}.\] 

Notice that $S\setminus \bigcup p_i$ has two connected components $(S\setminus \bigcup p_i)^+$ and  $(S\setminus \bigcup p_i)^-$, we will call $(S\setminus \bigcup p_i)^+$ the \emph{ top component} and $(S\setminus \bigcup p_i)^-$ the \emph{bottom component}. In the same fashion, $S\setminus \bigcup c_i$ has two connected components $(S\setminus \bigcup c_i)^+$ and $(S\setminus \bigcup c_i)^-$, we will call $(S\setminus \bigcup c_i)^+$ the \emph{front component} and   $(S\setminus \bigcup c_i)^-$ the \emph{back component}. 

For each $k=2,\dots,g-1$, the set $\partial(p_1,c_1,\dots,p_k,c_k)$  consists of two curves: one of them in $(S\setminus \bigcup p_i)^+$ and the other one  in $(S\setminus \bigcup p_i)^-$. We will call $p_k^+$ the curve  of  $\partial(p_1,c_1,\dots,p_k,c_k)$  contained in  $(S\setminus \bigcup p_i)^+$ and by  $p_k^-$ the curve of  $\partial(p_1,c_1,\dots,p_k,c_k)$  in  $(S\setminus \bigcup p_i)^-$. We set 
\[P=\{p_1\dots,p_{g+1}\}\cup\{p_2^+,p_2^-, \dots, p_{g-1}^+,p_{g-1}^-\}.\]Notice that $P$ is a pants decomposition  (see Figure \ref{fig:pant_decomp_nopuncs}).

For each $k=2,\dots,g-1$, the set $\partial(p_{k-1},c_k,p_k)$ has two curves, one in $(S\setminus \bigcup p_i)^+$ and the other one in $(S\setminus \bigcup p_i)^-$. We will denote by $u_k$ the curve in $(S\setminus \bigcup p_i)^+$  and by $d_k$ the curve in  $(S\setminus \bigcup p_i)^-$ (see Figure \ref{fig:ctilde}). We set 
\[
U=\{u_2,\dots,u_{g-1}\}
\]
and 
\[
D=\{d_2,\dots,d_{g-1}\}.
\]

Given $k\in \{2,\dots,g-1\}$ the set $\partial(p_k,c_k, p_k^+ )$ contains two curves and  only one of them is also a curve in $P$. We will denote by $l_k$ the curve in  $\partial(p_k,c_k, p_k^+ )$ not already in $P$ (see Figure \ref{fig:upper}). We set \[L = \{l_2, \dots, l_{g-2}\}.\] Analogously, let  $R=\{r_2, \dots, r_{g-2}\}$ be the set of curves where $r_k$ is the unique curve in  $\partial( {p_{k+1}}^+, c_{k+1}, p_{k+2})$ that is not in $P$ (see Figure \ref{fig:upper}).

The set $\partial (p_2,\,c_2,\,\dots p_{g-1},\,c_{g-1},\,p_g)$ has two curves, one curve in each component of $S\setminus \bigcup p_i$. Let $b$ be the curve contained in the bottom component $(S\setminus \bigcup p_i)^-$. Then, the set $\partial(c_1,\,b_,\,c_g)$ has exactly two curves, one curve contained in $(S\setminus \bigcup c_i)^+$ and the other in $(S\setminus \bigcup c_i)^-$. Denote by $nd$ the curve  in $(S\setminus \bigcup c_i)^+$ (see Figure \ref{fig:ed}). 

Lastly, consider the torus $T_1$ that contains $p_1$ and is bounded by the curves $p_2^+,p_2^-$. Let $nl$  be the unique curve contained in $T_1\setminus (nd \cup c_1)$ distinct from  $c_1$ and $p_2^+$. In the same way, $p_{g-1}^+, p_{g-1}^-$ bound a torus $T_g$  such that $p_g\subset T_g$, let $nr$  be the unique curve in $T_g\setminus (nd\cup c_g)$ distinct from $c_g$ and $p_{g-1}^+$ (see Figure \ref{fig:el_er}). We set \[N=\{nl, \,nr,\,nd\}.\]

\begin{figure}[h]
  \centering
  \begin{subfigure}{.45\linewidth}
    \centering
    \includegraphics[width=0.8\linewidth]{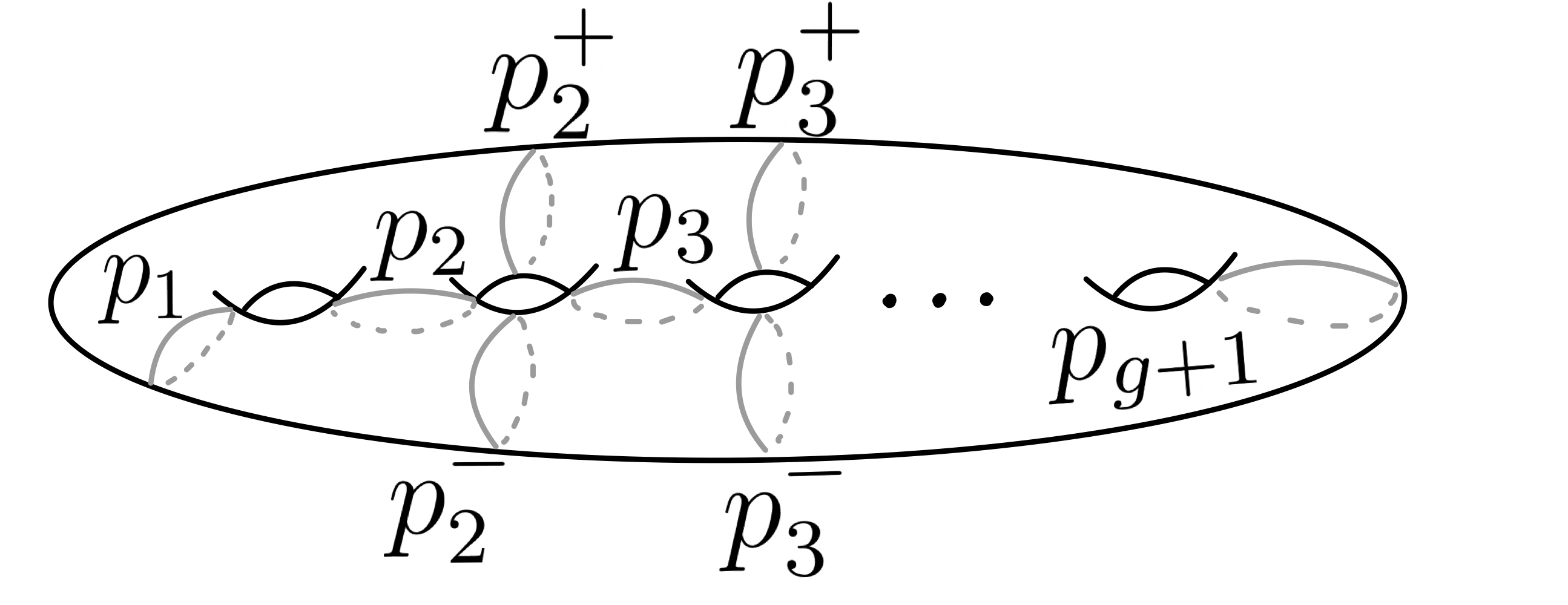}
  \caption{Pants decomposition $P$ of surface $S_g$. As an example, note that $p_1$ and $p_2$ are adjacent rel to $P$; while $p_1$ and $p_3$ are not adjacent rel to $P$.}
  \label{fig:pant_decomp_nopuncs}
  \end{subfigure}
  \begin{subfigure}{.45\linewidth}
    \centering
    \includegraphics[width=0.8\linewidth]{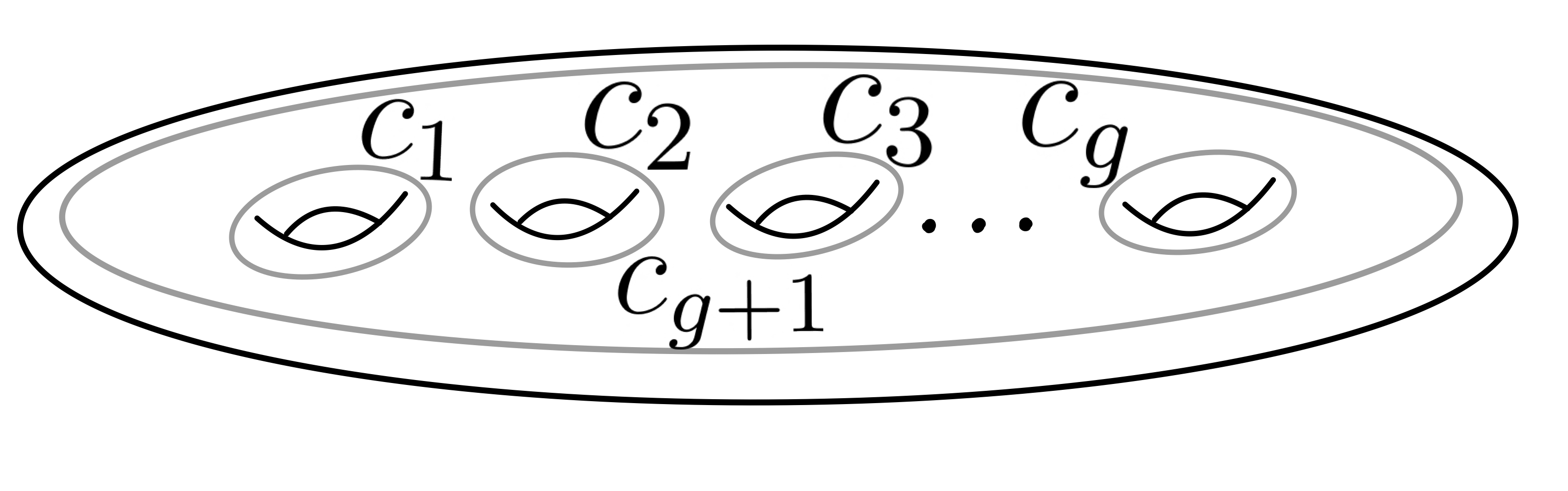}
  \caption{Circular curves $C$.}
  \label{fig:circular_nopuncs}
  \end{subfigure}\par\medskip

  \begin{subfigure}{.45\linewidth}
    \centering
    \includegraphics[width=0.8\linewidth]{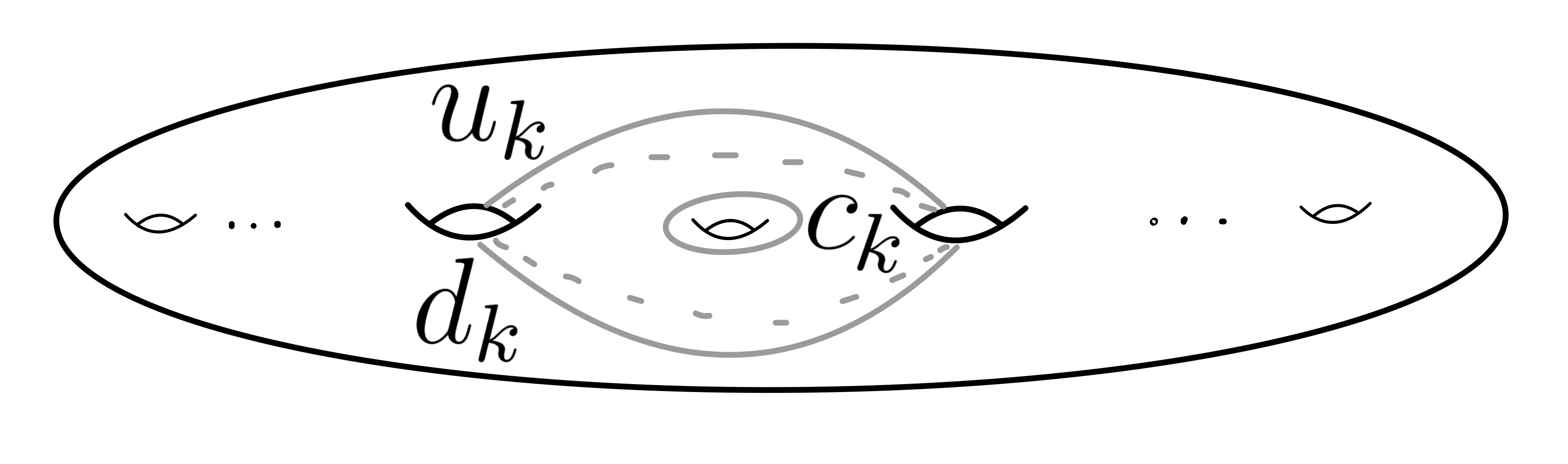}
  \caption{Up and down curves $u_k, d_k$.}
  \label{fig:ctilde}
  \end{subfigure}
  \begin{subfigure}{.45\linewidth}
    \centering
    \includegraphics[width=0.8\linewidth]{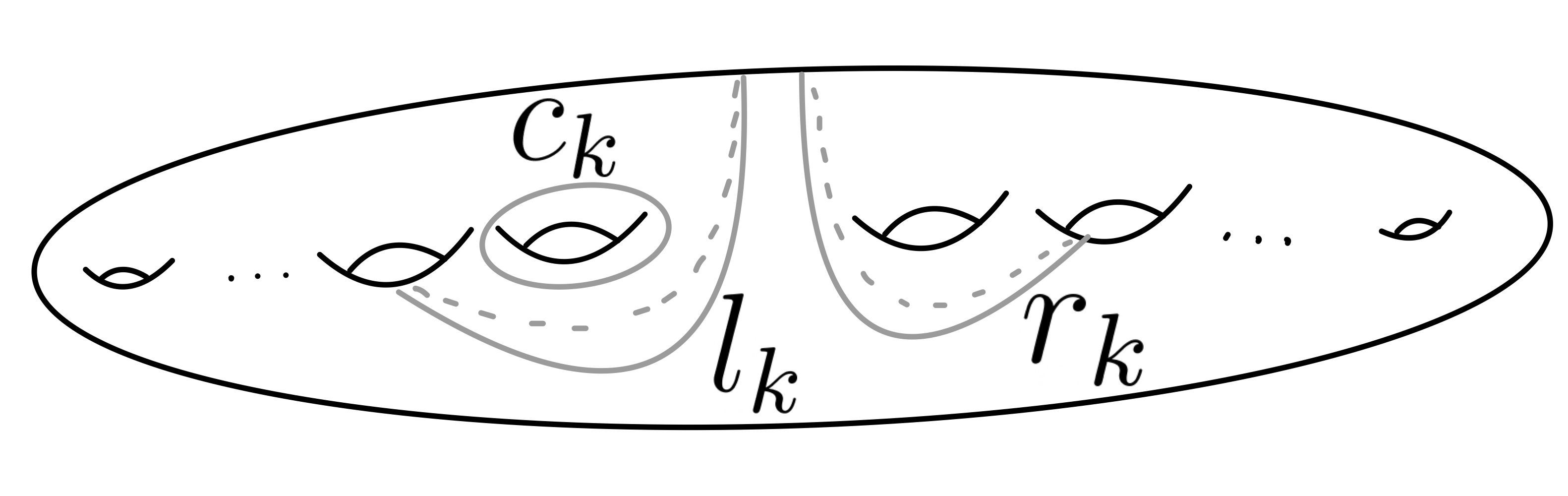}
  \caption{Left and right curves $l_k, r_k$.}
  \label{fig:upper}
  \end{subfigure}\par\medskip

  \begin{subfigure}{.45\linewidth}
    \centering
    \includegraphics[width=0.8\linewidth]{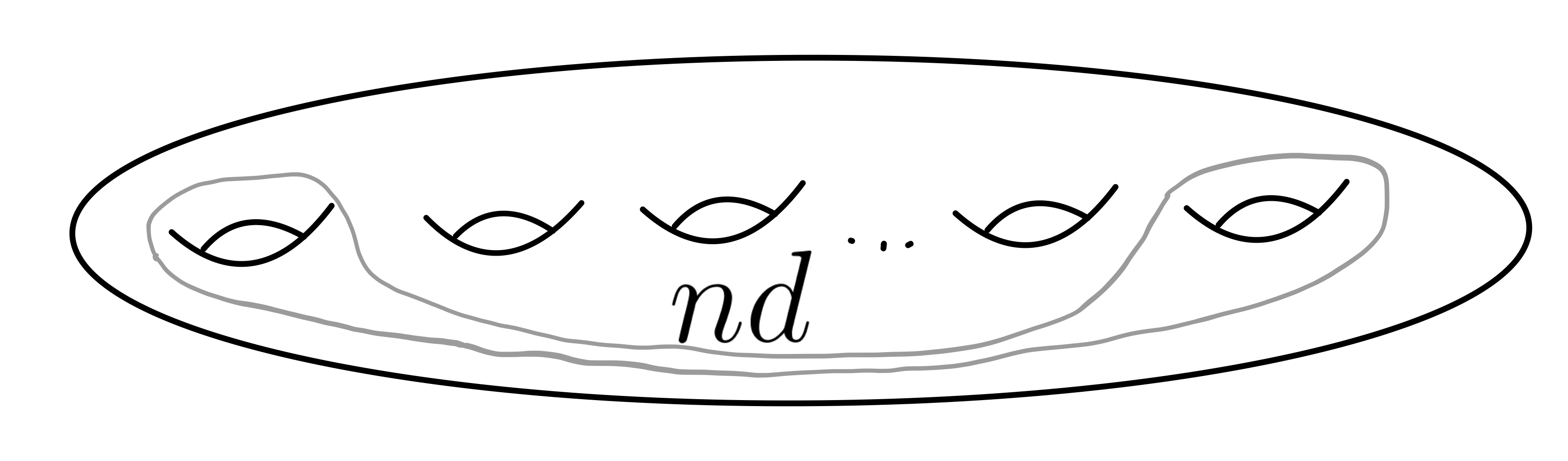}
  \caption{Non-symmetrical down curve $nd$.}
  \label{fig:ed}
  \end{subfigure}
  \begin{subfigure}{.45\linewidth}
    \centering
    \includegraphics[width=0.8\linewidth]{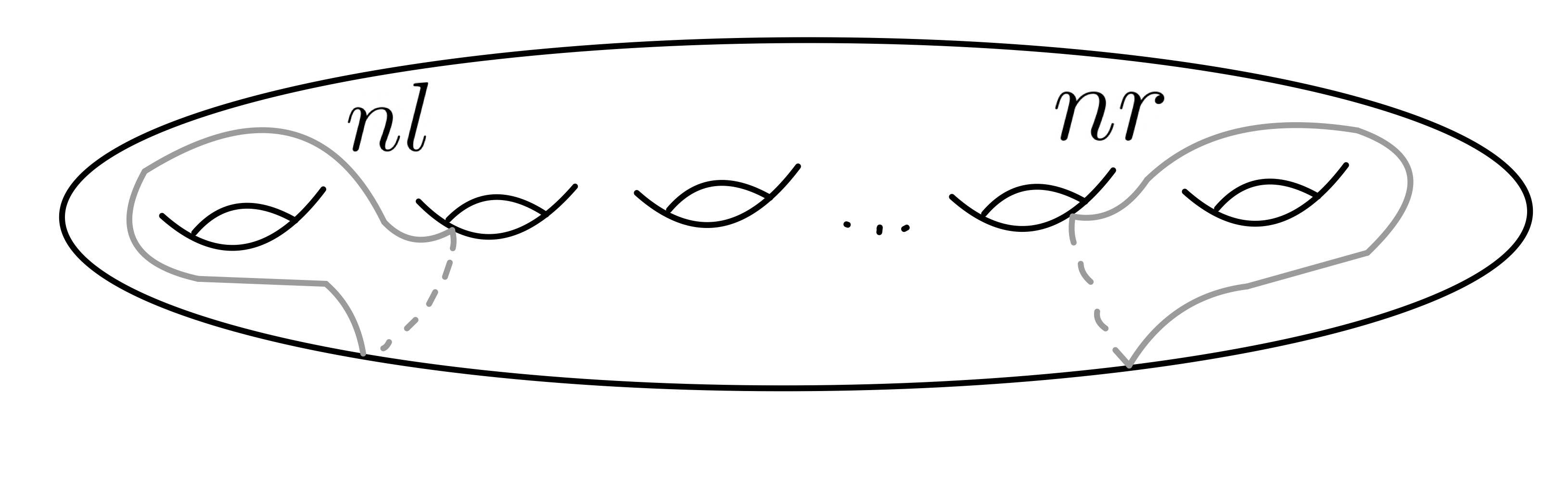}
  \caption{Non-symmetrical left and right curves $nl, \, nr$.}
  \label{fig:el_er}
  \end{subfigure}
  \caption{Curves in $F_R$ for a closed surface.}
\end{figure}

We set $F_R$ to be the   subcomplex of $\nonsep(S)$  spanned by the vertices in 
\[ P \cup C \cup U \cup D \cup L \cup R \cup N.\]

\begin{remark}\label{rmk:dist_two}
  Note that the subcomplex  $F_R$   has diameter two. Thus, any \lis map $\phi: F_R \to \nonsep(S)$ is injective. 
\end{remark}

\subsection{Proving the rigidity of $F_R$} The first step is to check that the \lis map $\phi: F_R \to \nonsep(S)$ preserves the \emph{non} adjacency rel to $P$. To do so, we require the following technical lemma:

\begin{lemma}\label{lemma:non_adj}
  Let $S$ be a finite-type surface,  $F_R \subset \nonsep(S)$ a  subcomplex, $P\subset F_R$ a pants decomposition and $a,b\in P$ two  curves. If there exist subsets $A,B \subset P$ and $\tilde{A}, \tilde{B} \subset F_R$ satisfying the following assertions:
  \begin{itemize}
    \item $a\in A$ and $b\in B$,
    \item $\tilde{A}$ substitutes $A$ in $P$,
    \item $\tilde{B}$ substitutes $B$ in $P$,
    \item $\tilde{A} \cup \tilde{B}$ substitutes $A\cup B$ in $P$, and
    \item $A\cap B=\emptyset$.
  \end{itemize}
  Then $a,b$ are not adjacent rel to $P$.
\end{lemma}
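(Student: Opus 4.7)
The plan is to argue by contradiction. Suppose $a,b$ are adjacent relative to $P$, so there exist $c \in P$ and a pair of pants $Q \subset S\setminus P$ bounded by $a$, $b$ and $c$ (allowing $c=a$ or $c=b$ in the degenerate self-gluing cases). I will produce curves $\tilde{a}\in\tilde{A}$ and $\tilde{b}\in\tilde{B}$ that essentially cross $a$ and $b$ respectively, and then show that the arcs they draw inside $Q$ cannot simultaneously be disjoint and essential.

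First I would use maximality of pants decompositions to locate the witnesses. Since $(P\setminus A)\cup\tilde{A}$ is a pants decomposition (hence a maximal multicurve) and $a$ belongs neither to $P\setminus A$ nor to $\tilde{A}$ (because $a\in A$ and $A\cap\tilde{A}=\emptyset$), the curve $a$ must essentially intersect some curve of $(P\setminus A)\cup\tilde{A}$; as $a$ is already disjoint from $P\setminus A$, that curve must lie in $\tilde{A}$. Call it $\tilde{a}$. Symmetrically I obtain $\tilde{b}\in\tilde{B}$ with $i(b,\tilde{b})>0$. Three disjointness properties then follow from the substitution hypotheses: $\tilde{a}$ is disjoint from every curve in $P\setminus A$, in particular from $b$ (since $b\in B\subset P\setminus A$); symmetrically $\tilde{b}$ is disjoint from $a$; and since $\tilde{A}\cup\tilde{B}$ substitutes $A\cup B$, the sets $\tilde{A}$ and $\tilde{B}$ are mutually disjoint, so $\tilde{a}\cap\tilde{b}=\emptyset$.

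Next, after isotoping $\tilde{a}, \tilde{b}$ to minimal position with $P$, the standard no-bigon criterion guarantees that every component of $\tilde{a}\cap Q$ and $\tilde{b}\cap Q$ is an essential arc in $Q$ relative to $\partial Q$. The essential intersection with $a$ (resp.\ $b$) forces at least one arc $\alpha \subset \tilde{a}\cap Q$ to have an endpoint on $a$ and at least one arc $\beta \subset \tilde{b}\cap Q$ to have an endpoint on $b$. The disjointness properties further restrict the remaining endpoints: $\alpha$'s other endpoint lies on $a$ or on $c$ (the latter only if $c\in A$), and $\beta$'s other endpoint lies on $b$ or on $c$ (the latter only if $c\in B$). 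Since $A\cap B=\emptyset$, at most one of $A, B$ contains $c$.

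The contradiction will come from standard arc topology in a pair of pants: an essential arc with both endpoints on one boundary circle separates the pants into two annuli, one containing each of the remaining boundaries, while an essential arc between two distinct boundary circles cuts the pants into a single annulus. In every permitted configuration for $\alpha$, the component of $Q\setminus\alpha$ containing $b$ is therefore an annulus whose boundary includes $b$. Since $\alpha\cap\beta=\emptyset$, the arc $\beta$ lies in this annulus; but an arc in an annulus with both endpoints on a single boundary component is boundary parallel, contradicting essentiality of $\beta$ whenever $\beta$ has both endpoints on $b$. The remaining possibility — $\beta$ having one endpoint on $c$, which requires $c\in B$ and hence $\alpha$ to be an $a$-to-$a$ arc separating the $b$-annulus from the $c$-annulus — would force $\beta$ to cross $\alpha$ in order to reach $c$, again violating $\alpha\cap\beta=\emptyset$. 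I expect the main technical obstacle to be the bookkeeping in the case analysis (whether $c\in A$, $c\in B$, or in neither, together with the degenerate gluings $c=a$ or $c=b$), but every case reduces to the same arc-in-annulus contradiction.
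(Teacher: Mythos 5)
Your argument is correct and takes essentially the same approach as the paper's proof: use maximality of the substituted pants decompositions to extract $\tilde{a}\in\tilde{A}$ crossing $a$ and $\tilde{b}\in\tilde{B}$ crossing $b$, record the disjointness relations ($\tilde{a}$ misses $b$, $\tilde{b}$ misses $a$, $\tilde{a}$ misses $\tilde{b}$), and then derive a contradiction from the impossibility of the resulting essential arcs coexisting in the pair of pants $Q$. The paper compresses the final arc argument into a single figure, while you spell out the case analysis (whether $c\in A$, $c\in B$, or neither, plus the degenerate self-gluing cases); your write-up is, if anything, slightly more complete on that last step.
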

\begin{proof}
  We will proceed by contradiction. Suppose the curves  $a,b$ are adjacent in a pair of pants $Q$ bound by $a,b,c$. Since $A\cap B =\emptyset$ either $c\not \in A$ or $c\not \in B$. Without loss of generality, suppose $c\not \in A$. 
  
  If $\tilde{A}$ is a substitution of $A$ and  $A \cap B = \emptyset$, then there is a curve $\tilde{a}\in \tilde{A}$ such that $i(\tilde{a},  a)\neq 0$, $i(\tilde{a}, b)=0$ and $i(\tilde{a}, c) = 0$. Now, note that since $\tilde{B}$ and $\tilde{A}\cup \tilde{B}$ are  substitutions, there exists  $\tilde{b}\in \tilde{B}$ with $i (\tilde{b}, b )\neq 0$, $i(\tilde{b},a)=0$ and $i(\tilde{b} , \tilde{a}) = 0$. However, is impossible to have arcs  $\tilde{a}\cap Q$ and $\tilde{b}\cap Q$ satisfying the intersections above (see Figure \ref{fig:arcsQ}).
\end{proof}
\begin{figure}
  \begin{center}
  \includegraphics[width=0.30\linewidth]{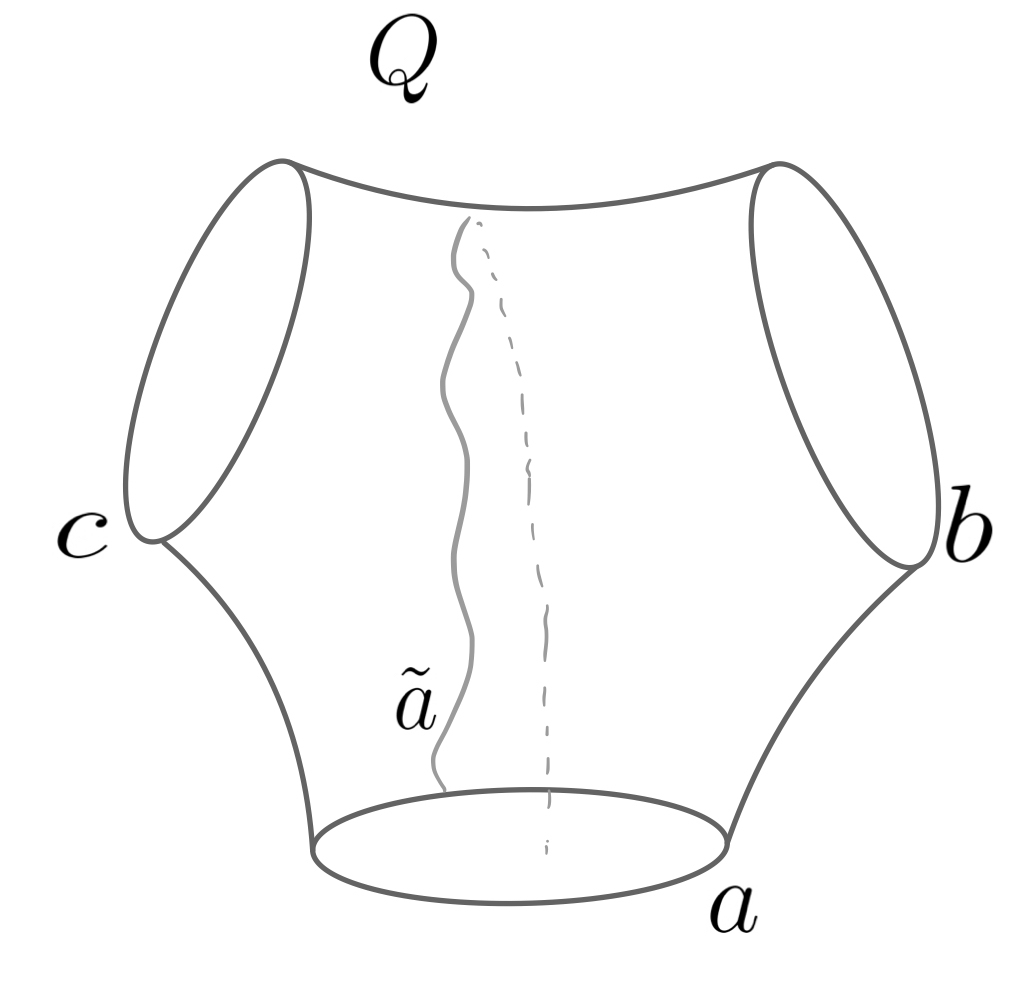}
  \caption{ There can be no arc $\tilde{b}\cap Q$ that intersects $b$ and  does not intersect $\tilde{a}\cup a$.}
  \label{fig:arcsQ}
  \end{center}
\end{figure}

Now, we prove that $\phi$ preserves non-adjacency rel to $P$:

\begin{lemma}\label{lemma:compact_non_adj_rel_p}
  Let $S$ be a  closed surface of genus $g\geq 3$ and  $\phi: F_R \to \nonsep(S)$ a \lis map. If $a,b\in P$ are not adjacent rel to $P$, then $\phi(a),\phi(b)$ are not adjacent rel to $\phi(P)$.
\end{lemma}
\begin{proof}
  Assume that for two curves $a,b \in P$ we have subsets $A,B \subset P$ and $\tilde{A},\tilde{B}\subset F_R$ as in Lemma \ref{lemma:non_adj}. Under these conditions, the lemma ensures that $a$ and $b$ are not adjacent rel to $P$. Moreover, these properties are carried to the image, that is, the curves $\phi(a),\phi(b)\in \phi(P)$ satisfy the conditions of Lemma \ref{lemma:non_adj} for the sets $\phi(A),\phi(B) \subset \phi(P)$ and $\phi(\tilde{A}),\phi(\tilde{B})$. As a consequence, we  deduce that $\phi(a)$ and $\phi(b)$ are not adjacent rel to $\phi(P)$.
  
  By means of the method above, we are only left to find appropriate subsets $A, \tilde{A}, B ,\tilde{B}$  for any non-adjacent curves $a,b\in P$. We will find such subsets for certain $a, b\in P$, as the rest of the cases are similar.

  If $a=p_k$ and $b=p_{k+1}$ for $k\in\{3,\dots, g-3\}$, we can consider $A=\{p_{k-1}^-,p_k\}$ and $\tilde{A}=\{l_{k-1}, d_{k-1}\}$, $B=\{p_{k+1}, p_{k+1}^-\}$ and $\tilde{B}=\{r_k, d_{k+1}\}$. It is straightforward to check that these subsets satisfy conditions of Lemma \ref{lemma:non_adj} and so $\phi(a)$, $\phi(b)$ are not adjacent rel to $\phi(P)$.

  If $a=p_1$. Consider $A = \{p_1,p_2\}$ and $\tilde{A}=\{nl, c_1\}$, then 
  \begin{itemize}
    \item If $b\in \{p_g, p_{g+1}\}$,  take $B = \{p_g,p_{g+1}\}$ and $\tilde{B} = \{nr,c_g\}$. 
    \item If $b \in \{p_k, {p_k^-}\}$ for $k\in \{3, \dots, g-1\}$, consider $B= \{p_k, {p_k^-}\}$ and $\tilde{B} = \{{c_k},r_{k-1}\}$.
    \item If $b=p_k^+$ for $k\in \{3, \dots, g-1\}$, consider $B = \{{p_k^+}\}$ and $\tilde{B}=\{u_k\}$.
  \end{itemize}

\end{proof}

Using the previous result, we prove that $\phi$  preserves adjacency rel to $P$:

\begin{lemma}\label{lemma:compact_adj_rel_p}
  Let $S$ be a  closed surface of genus $g\geq 3$ and   $\phi: F_R \to \nonsep(S)$  a \lis map. If $a,b \in P$ are adjacent rel to $P$, then $\phi(a),\phi(b)$ are adjacent rel to $\phi(P)$.
\end{lemma}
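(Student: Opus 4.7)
My plan is to pass to the adjacency graphs of the two pants decompositions $P$ and $\phi(P)$ and compare their triangle counts.

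First, since $F_R$ has diameter two (Remark~\ref{rmk:dist_two}), the map $\phi$ is injective; hence $\phi(P)$ consists of $3g-3$ distinct pairwise disjoint curves spanning a top-dimensional simplex of $\nonsep(S)$, and is therefore a pants decomposition of $S$. Let $G_P$ (respectively $G_{\phi(P)}$) be the graph whose vertices are the curves of $P$ (respectively $\phi(P)$) and whose edges record the pairs of curves that are adjacent rel to $P$ (respectively rel to $\phi(P)$). The contrapositive of Lemma~\ref{lemma:compact_non_adj_rel_p} says exactly that, after identifying the vertex sets via $\phi|_P$, the graph $G_{\phi(P)}$ is a subgraph of $G_P$.

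The heart of the argument is a classification of triangles. Three pairwise adjacent curves in a pants decomposition $P'$ either (i) bound a common pair of pants in $P'$, or (ii) lie on the boundaries of three distinct pants of $P'$ pairwise sharing exactly one of the three curves (in case (ii) the three pants glue to a torus with three boundary components). Equivalently, the triangles of $G_{P'}$ correspond to the pants of $P'$ together with the $3$-cycles of the dual multigraph $\Gamma_{P'}$ whose vertices are the pants of $P'$ and whose edges are the curves of $P'$ (each edge joining the two pants that it bounds). By direct inspection of Figure~\ref{fig:pant_decomp_nopuncs}, the dual graph $\Gamma_P$ is a ``ladder'' of pants with double edges at its two ends and single edges between consecutive rungs; in particular $\Gamma_P$ contains no $3$-cycle, and consequently $G_P$ has exactly $2g-2$ triangles, one per pants of $P$.

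On the other side, the $2g-2$ pants of $\phi(P)$ contribute $2g-2$ distinct triangles to $G_{\phi(P)}$, while the inclusion $G_{\phi(P)} \subseteq G_P$ caps the triangle count of $G_{\phi(P)}$ at $2g-2$. Hence the two sets of triangles coincide, and each triangle of $G_{\phi(P)}$ is bounded by a pants of $\phi(P)$. In particular, if $a,b\in P$ are adjacent rel to $P$ via a pants $\{a,b,c\}$, then $\{\phi(a),\phi(b),\phi(c)\}$ is a triangle of $G_{\phi(P)}$ bounded by a pants of $\phi(P)$, so $\phi(a)$ and $\phi(b)$ are adjacent rel to $\phi(P)$. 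The main obstacle is the finite combinatorial verification that $\Gamma_P$ contains no $3$-cycle, which is a case check carried out directly from Figure~\ref{fig:pant_decomp_nopuncs}.
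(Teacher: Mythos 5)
Your proof is correct, and it takes a genuinely different route from the paper's. The paper's argument is local and inductive: combining the upper bound on adjacency from Lemma~\ref{lemma:compact_non_adj_rel_p} with the lower bound from Remark~\ref{rmk:at_least_adj_curves}, it pins down the exact set of curves adjacent to $\phi(p_1)$, then to $\phi(p_2)$, then uses the resulting $S_{1,2}$-subsurface to handle $\phi(p_2^\pm)$, and propagates along the pants decomposition one vertex at a time. You instead work globally: after classifying triangles of the adjacency graph of a pants decomposition as either pants boundaries or dual $3$-cycles, you verify by inspection that $\Gamma_P$ has no $3$-cycle, so $G_P$ has exactly $2g-2$ triangles; since $G_{\phi(P)} \subseteq G_P$ by Lemma~\ref{lemma:compact_non_adj_rel_p} while $\phi(P)$ itself already supplies $2g-2$ distinct pants triangles (distinctness uses $g\geq 3$, since two pants sharing all three boundary curves would force $S$ to have genus $2$), the triangle sets agree and hence the edge sets agree. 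Your approach dispenses with Remark~\ref{rmk:at_least_adj_curves} and settles all adjacencies in one counting stroke rather than vertex by vertex; its cost is the triangle dichotomy and the finite check that $\Gamma_P$ is $3$-cycle-free. Both arguments deliver the slightly stronger fact actually used in Corollary~\ref{lemma:fr_top_typ}, namely that $\phi$ carries pants boundaries to pants boundaries, not merely adjacent pairs to adjacent pairs, and your write-up makes this byproduct explicit.
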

\begin{proof}
  Take $\phi(p_1) \in \phi(P)$. From the non-adjacency rel to $\phi(P)$, it follows that $\phi(p_1)$ has at most three adjacent curves. On the other hand,  Remark \ref{rmk:at_least_adj_curves} implies that  $\phi(p_1)$ has at least three adjacent curves. Thus, we conclude that $\phi(p_1)$ has exactly three adjacent curves, namely $\phi(p_2), \phi(p_2^+), \phi(p_2^-)$. The same argument applies to $\phi(p_2)$, so it is adjacent rel to $\phi(P)$ to exactly three curves, namely $\phi(p_1), \phi(p_2^+), \phi(p_2^-)$.

  We now determine the curves adjacent to $\phi(p_2^+)$ and $\phi(p_2^-)$. First, note that the adjacency rel to $\phi(P)$ of $\phi(p_1)$ and $\phi(p_2)$ implies that  $\phi(p_2^+)$ and $\phi(p_2^-)$ bound a subsurface homeomorphic to $S_{1,2}$. Since both curves $\phi(p_2^+)$ and $\phi(p_2^-)$ are non separating, it follows that both have  four adjacent curves rel to $\phi(P)$. Finally, considering the non adjacency rel to $\phi(P)$, it follows that the curves adjacent to $\phi(p_2^+)$  are   \[\{\phi(p_1), \phi(p_2), \phi(p_3), \phi(p_3^+)\}\] and the curves adjacent to $\phi(p_2^-)$ are  \[ \{\phi(p_1), \phi(p_2), \phi(p_3), \phi(p_3^-)\}.\]
  
  In the same style, we can argue inductively to determine the adjacency of each curve in $\phi(P)$.
\end{proof}

As a corollary we obtain that $\phi$ preserves the topological type of $P$:

\begin{corollary}\label{lemma:fr_top_typ}
  Let $S$ be a  finite-type surface and   $\phi: F_R \to \nonsep(S)$  a map that preserves adjacency rel to $P$. There exists $h\in \mcg(S)$ such that $h|_P=\phi|_P$.
\end{corollary}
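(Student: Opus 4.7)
The plan is to translate the combinatorial information encoded by $\phi$ preserving adjacency rel to $P$ into geometric content about the pants decomposition $\phi(P)$, and then invoke the change of coordinates principle to produce a mapping class realising $\phi|_P$.

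First, I would observe that $\phi(P)$ is itself a pants decomposition of $S$ by non-separating curves. By Remark \ref{rmk:dist_two} the map $\phi$ is globally injective, so $\phi(P)$ is a set of $|P|$ pairwise disjoint non-separating curves; since $P$ is a top-dimensional simplex of $\nonsep(S)$, so is $\phi(P)$, i.e.\ a pants decomposition.

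Second, the hypothesis on $\phi$ amounts to saying that the \emph{dual multigraph} of $\phi(P)$ — the graph with one vertex per pair of pants and one edge per shared boundary curve — is isomorphic, via the bijection $p\mapsto \phi(p)$, to the dual multigraph of $P$. By the classification of surfaces and the change of coordinates principle (see \cite[Chapter 1.3]{farb_primer_2012}), two pants decompositions of the same surface whose labelled dual graphs coincide are related by a self-homeomorphism of $S$ realising that labelling. This produces a mapping class $h\in \mcg(S)$ with $h(p)=\phi(p)$ for every $p\in P$.

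The only point that requires care is that the adjacency data must determine the \emph{labelled} (not merely unlabelled) dual graph of $\phi(P)$, so that the resulting homeomorphism realises the specific correspondence prescribed by $\phi$ rather than some other graph automorphism. This is not a real obstacle: by hypothesis the bijection $p\mapsto \phi(p)$ is itself a graph isomorphism, and the preceding lemmas were in fact devoted to pinning down the adjacencies of each individual curve $\phi(p)$, so the labelled correspondence comes for free.
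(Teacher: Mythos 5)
Your proof is correct and is essentially the same argument as the paper's. The paper carries out the underlying cut-and-paste construction explicitly — cutting along $P$ and along $\phi(P)$ into pairs of pants, choosing a homeomorphism of each pair of pants onto its $\phi$-image, and gluing these along matching boundary curves — whereas you invoke the resulting change-of-coordinates statement for pants decompositions (pants decompositions with the same labelled dual graph are related by a self-homeomorphism) as a black box; the content and the role of the adjacency hypothesis are identical.
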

\begin{proof}
  We construct a homeomorphism $h$ inductively by gluing abstract pairs of pants.
  
  Consider the pairs of pants $Q_1, \dots, Q_k \subset S$ bound by curves in $P$ and denote $Q'_1, \dots,Q'_k \subset S$ the pairs of pants satisfying $\partial Q'_i = \phi(\partial Q_i)$. Any two pairs of pants are homeomorphic, thus we can consider homeomorphisms $h_i(Q_i)=Q'_i$. Since $\phi$ preserves the adjacency rel to $P$, we can ensure these homeomorphisms agree on the boundary curves. Then, by gluing the maps $h_i$ we obtain a homeomorphism $h$ of the surface such that $h|_P=\phi|_P$.  
\end{proof}

The next three lemmas prove that $F_R$ detects intersection among certain curves. Recall that $F_R$ detects the intersection between $a$ and $b$ if for any locally injective simplicial map $\phi:F_R\to \nonsep(S)$ we have that 
\[i(a,b)\neq 0 \Leftrightarrow  i(\phi(a),\phi(b)) \neq 0.\]

\begin{lemma}\label{lemma:intersections_1}
  The subcomplex $F_R\subset \nonsep(S)$ detects the following intersections for every $k=2,\dots, g-1$:
  \begin{enumerate}
    \item $u_k$ with $p_k^+$ and \label{lema:1_1}
    \item $d_k$ with $p_k^-$. \label{lema:1_2}
  \end{enumerate}
\end{lemma}
\begin{proof}
  Let $\phi:F_R \to \nonsep(S)$ be a locally injective simplicial map. We need to check that $i(\phi(u_k),\phi(p_k^+)) \neq 0$  and $i(\phi(d_k),\phi(p_k^-)) \neq 0$. 

  Seeking a contradiction to case (\ref{lema:1_1}), we assume $i(\phi(u_k),\phi(p_k^+)) = 0$. Since $\phi$ is locally injective, it sends disjoint curves to disjoint curves. Thus, $\phi(u_k)$ is disjoint from every curve in the pants decomposition $\phi(P)$, which implies $\phi(u_k)\in \phi(P)$. However, this contradicts the injectivity of $\phi$ (see Remark $\ref{rmk:dist_two}$). 
  
  To prove the case (\ref{lema:1_2}), the same argument works.
\end{proof}
\begin{remark}
  Notice that from the previous lemma we actually know that $F_R$ detects the intersection of $u_k$ with every curve in $P$. Indeed, $u_k$ is disjoint from any curve in $P\setminus \{p_k^+\}$ and $\phi$ preserves disjointness. In the same way, $F_R$ detects the intersection of $d_k$ with every curve in $P$. 
\end{remark}

\begin{lemma}\label{lemma:aux10}
  The subcomplex $F_R\subset \nonsep(S)$ detects the intersection of $c_k$ with  $p_k^-$ and $p_k^+$, for every $k\in \{2,\,\dots,\, g-1\}$.
\end{lemma}
\begin{proof}
  Let $\phi:F_R \to \nonsep(S)$ be a locally injective simplicial map. By Corollary \ref{lemma:fr_top_typ}, there exists $h\in \mcg(S)$ such that $h\circ \phi$ fixes the pants decomposition $P$. Observe that detecting  intersection is equivalent for $\phi$ and for $h\circ \phi$, since we have  
  \[i(\phi(a),\phi(b))\neq 0 \Leftrightarrow  i(h\circ \phi(a), h\circ \phi(b)) \neq 0.\] And so we can rename $h\circ \phi$ to $\phi$, and prove the statement assuming $\phi$ fixes every $p\in P$.

  With the previous simplification, the proof boils down to check that $\phi(c_k)$ intersects $ \,p_k^+$ and $p_k^-$. In this direction, consider the torus $T_k$ bounded by the curves $p_{k-1}^-,\,p_{k-1}^+,\,p_{k+1}^-$ and  $p_{k+1}^+$ (see Figure \ref{fig:upper_torus}). Further, define the top of the torus as $T_k^+=T_k\cap (S\setminus \bigcup p_i)^+$ and the bottom of the torus as $T_k^-=T_k\cap (S\setminus \bigcup p_i)^-$.

  \begin{figure}[h!]
    \centering
    \includegraphics[width=0.45\linewidth]{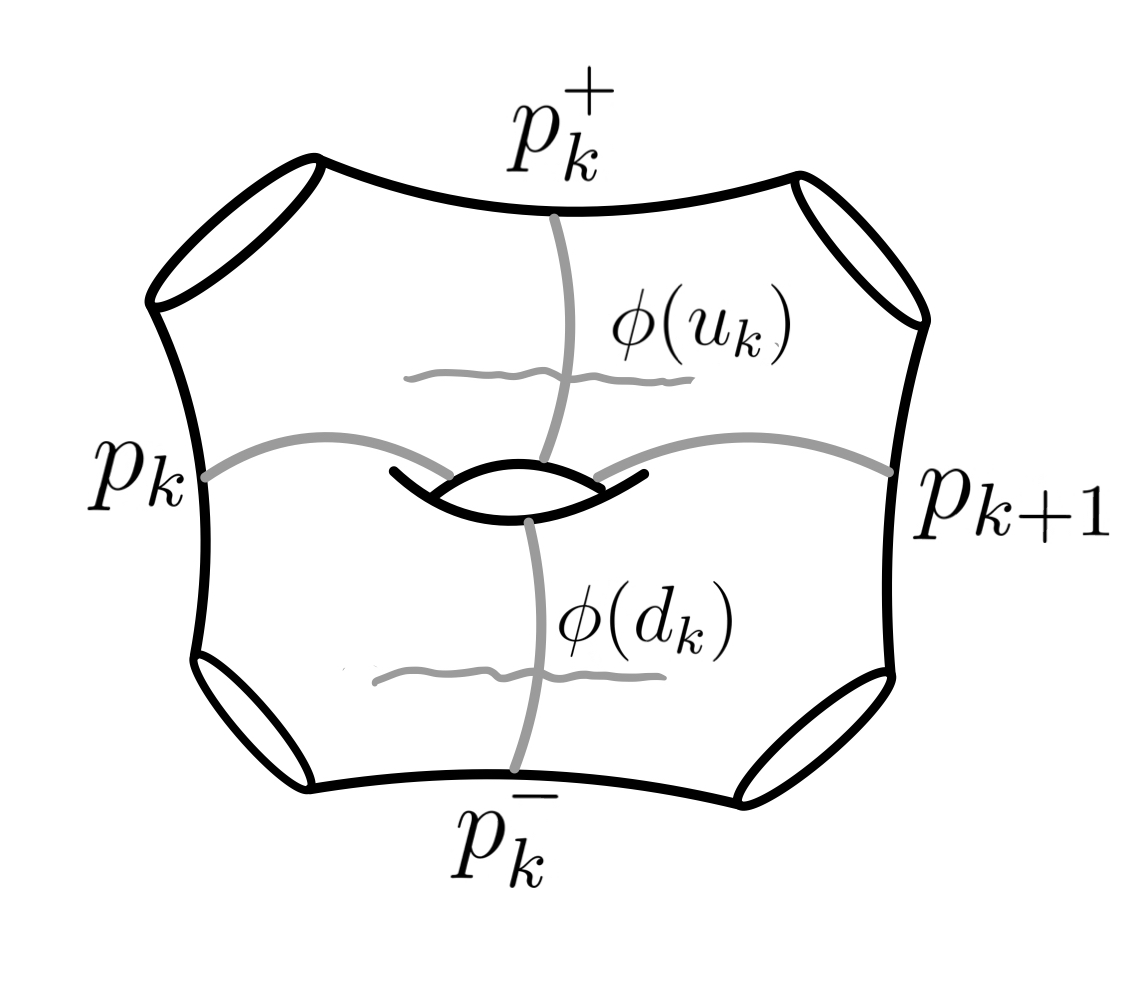}
    \caption{Torus $T_k$.}
    \label{fig:upper_torus}
  \end{figure}

  By Lemma \ref{lemma:intersections_1},  $\phi(u_k)$ is a curve in $T_k^+$ intersecting $p_k^+$  and $\phi(d_k)$ is a curve in $T_k^-$  intersecting $p_k^-$ (see Figure \ref{fig:upper_torus}). Notice $\phi(c_k)$ is a curve in $T_k$ distinct and disjoint from  $\phi(d_k)$ and $\phi(u_k)$. It follows that $\phi(c_k)$ intersects both  $T_k^+$ and $T_k^-$.

  To finish, note  $\phi(c_k)\cap T_k^+$ is disjoint from $\phi(u_k)$, so $\phi(c_k)$ must intersect $p_k^+$. Indeed, an arc disjoint from a curve in a sphere with four boundary components, must intersect every other curve in the sphere. Similarly, $\phi(c_k)\cap T_k^-$ is disjoint from $\phi(d_k)$, so $\phi(c_k)$ must intersect $p_k^-$.
\end{proof}

\begin{lemma}\label{lemma:detect_intersection_closed}
  The subcomplex $F_R\subset \nonsep(S)$ detects the intersection of $c_k\in C$ with every curve in $P$.
\end{lemma}
\begin{proof}
  Let $\phi:F_R \to \nonsep(S)$ be a locally injective simplicial map. As in the previous proof, we may assume that $\phi$ fixes every curve in $P$. 
  
  Now,  we start by proving the cases $2\leq k\leq g-1$. Note that with the simplification above and Lemma \ref{lemma:aux10}, we only need to check that $\phi(c_k)$ intersects $p_k$ and $p_{k+1}$: 

  To prove  $\phi(c_k)$ intersects $p_k$, consider the pair of pants $Q$ bounded by the curves $p_{k-1}^+, \,p_k$ and $p_k^+$. By Lemma \ref{lemma:aux10}, there are disjoint arcs $Q\cap \phi(c_{k-1})$ and $Q\cap \phi(c_k)$ with at least one endpoint in $p_{k-1}^+$ and $p_k^+$, respectively (see Figure \ref{fig:upper_pants}). Using that $\phi(c_{k-1}),\,p_k^+$ are disjoint and $\phi(c_k), \,p_{k-1}^+$ are also disjoint, we conclude that any such arc configuration requires $\phi(c_k)$ to intersect $p_k$. The same argument yields  that $\phi(c_k)$ intersects $p_{k+1}$.

  \begin{figure}[h!]
    \centering
    \includegraphics[width=0.45\linewidth]{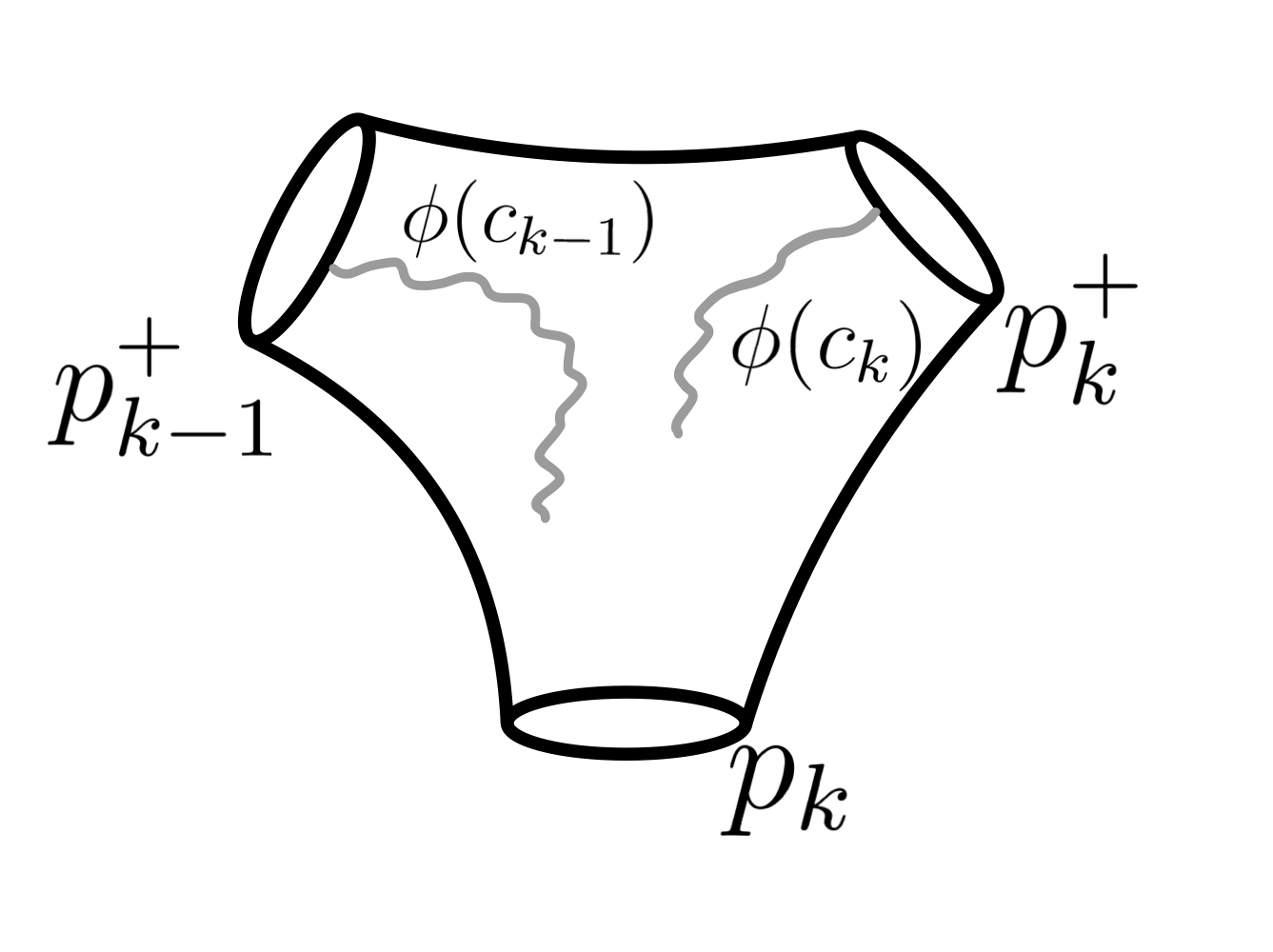}
    \caption{Pair of pants bounded by $p_{k-1}^+, \,p_k$ and $p_k^+$.}
    \label{fig:upper_pants}
  \end{figure}

We are left to prove the cases $k\in\{1,\,g,\,g+1\}$. First, we prove that $\phi(c_1)$ intersects $p_2$. Consider the torus $T_1$ bounded by the curves $p_2^+$ and $p_2^-$, and  denote by $T_1^+$ the pair of pants bounded by $p_1,p_2$ and $p_2^+$ (see Figure \ref{fig:upper_tone}). Note that $\phi(c_1)$ is a curve in $T_1$ distinct from $p_1,\, p_2, \,p_2^-$ and $p_2^+$, it follows that $\phi(c_1)$ intersects $T_1^+$. Thus, we have disjoint arcs $\phi(c_1) \cap T_1^+$ and $\phi(c_2)\cap T_1^+$, the last one having an endpoint in $p_2^+$ (see Figure \ref{fig:upper_tone}). Since $\phi(c_1),\,p_2^+$ are disjoint and $\phi(c_2),\, p_1$ are disjoint, we conclude that $\phi(c_1)$ must intersect $p_2$. Again, the same argument with slight changes yields that $\phi(c_g)$ intersects $p_g$.  

  \begin{figure}[h!]
    \centering
    \includegraphics[width=0.45\linewidth]{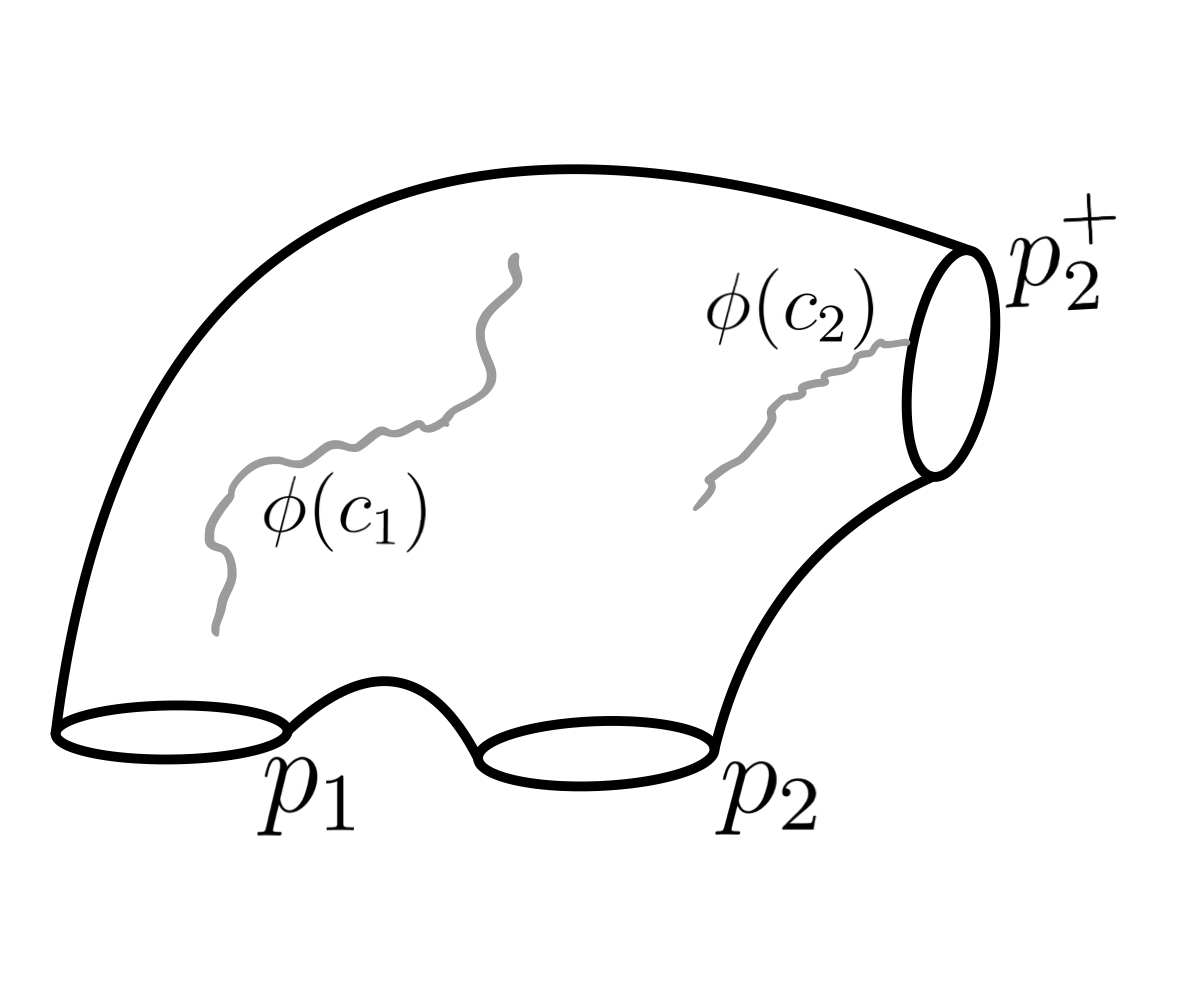}
    \caption{Pair of pants $T_1^+$.}
    \label{fig:upper_tone}
  \end{figure}
  
  Before proving that $\phi(c_1)$ intersects $p_1$ and $\phi(c_g)$ intersects $p_{g+1}$, we need to check that $\phi(c_{g+1})$ intersects both $p_1,\,p_g$ and every $p_k^+, p_k^-$.  Surely, $\phi(c_{g+1})$ intersects at least one of these curves, since otherwise $\phi(c_{g+1})$ would be disjoint and distinct from every curve in the pants decomposition $\phi(P)$. Suppose $\phi(c_{g+1})$ intersects $p_k^+$, and consider the pair of pants $Q$ bound by $p_{k-1}^+,\,p_k$ and $p_k^+$. Using the intersections above, we deduce that there are  disjoint arcs $\phi(c_{k-1})\cap Q$ and $\phi(c_{g+1})\cap Q$, so that $\phi(c_{g+1})$ must also intersect $p_{k-1}^+$. Repeating the argument iteratively, we can detect the intersection of $\phi(c_{g+1})$ with every curve in $P$.

  To finish the proof, we check that $\phi(c_1)$ intersects $p_1$. Consider the disjoint arcs  $a_2=\phi(c_2)\cap T_1^+$ and $a_{g+1}=\phi(c_{g+1})\cap T_1^+$, where $a_2$ has an endpoint in $p_2^+$ and $a_{g+1}$ has an endpoint in $p_1$. These two arcs exist by the above intersections. Moreover, $a_2$ is disjoint from $p_1$, and $a_{g+1}$ is disjoint from $p_2$. Recall that $\phi(c_1)$ intersects $T_1^+$ and, since it is disjoint from both $a_1,a_2$, it follows that $\phi(c_1)$ intersects $p_1$. A similar argument yields that $\phi(c_g)$ intersects $p_{g+1}$.
\end{proof}

So far we have seen that the map $\phi:F_R\to \nonsep(S)$ can be taken to agree with a homeomorphism on $P\subset F_R$, and that it detects some intersections. In the next lemma, we extend it so $\phi$ agrees with a homeomorphism on $F_R\setminus N$.

\begin{lemma}\label{lemma:aaa}
  Let $S$ be a  closed surface of genus $g\geq 3$ and   $\phi: F_R \to \nonsep(S)$ a \lis map. There exists $h\in \mcg(S)$ such that $h|_{F_R\setminus N}=\phi|_{F_R\setminus N}$.
\end{lemma}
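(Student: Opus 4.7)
The plan is to first align $\phi$ with a mapping class on $P$, and then verify that on each remaining vertex of $F_R\setminus N$ the simplicial constraints force the image to coincide with that mapping class, up to a further adjustment by an element of the pointwise stabilizer of $P$ in $\mcg(S)$.

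\textbf{Normalization.} By Lemma~\ref{lemma:compact_adj_rel_p} and Corollary~\ref{lemma:fr_top_typ}, there is $h_0\in \mcg(S)$ with $h_0|_P=\phi|_P$. Replacing $\phi$ by $h_0^{-1}\circ\phi$, I may assume $\phi(p)=p$ for every $p\in P$. Let $K\leq\mcg(S)$ denote the pointwise stabilizer of $P$: it is generated by Dehn twists along the curves of $P$ together with the reflections of the subsurfaces cut out by $P$ (in particular, by the back-front orientation-reversing involution, which simultaneously swaps $U\leftrightarrow D$ and $L\leftrightarrow R$). It therefore suffices to show that, after this normalization, $\phi$ agrees with some element of $K$ on all of $F_R\setminus N$.

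\textbf{Locating each image.} For every $v\in C\cup U\cup D\cup L\cup R$, simpliciality forces $\phi(v)$ to be disjoint from every $p\in P$ with $i(v,p)=0$; thus $\phi(v)$ lies in the (small) subsurface $\Sigma_v\subset S$ obtained by cutting along those $P$-curves. Injectivity of $\phi$ (Remark~\ref{rmk:dist_two}) and the disjointness constraints contributed by the other vertices of $F_R$ will pin $\phi(v)$ down to a unique non-separating class inside $\Sigma_v$, up to the action of $K$. For instance, the circular curve $c_k$ is disjoint in $F_R$ from every $p_j$ with $j\notin\{k,k+1\}$ and from $c_{k-1},c_{k+1}$, which forces $\phi(c_k)$ to sit in the one-holed-torus component determined by $p_k,p_{k+1}$ and to be the unique non-separating class there up to twisting along $p_k,p_{k+1}$. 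Analogous case-analyses, using that $u_k$ is disjoint from $p_k^+$ and not $p_k^-$ (dually for $d_k$), and using the definition of $l_k,r_k$ as boundary components of specific regular neighborhoods, identify each $\phi(v)$ modulo $K$.

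\textbf{Assembly.} Once $\phi(v)$ has been identified modulo $K$ for each $v\in F_R\setminus N$, the remaining discrepancy between $\phi$ and the identity is a single element $k\in K$, since the possible twists and reflections must be compatible across all of $F_R\setminus N$. Composing $h_0$ with $k$ yields the required $h\in\mcg(S)$ with $h|_{F_R\setminus N}=\phi|_{F_R\setminus N}$.

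\textbf{Main obstacle.} The delicate point is the case analysis in the second step: the $U\leftrightarrow D$ and $L\leftrightarrow R$ symmetry of $F_R\setminus N$ means that \emph{a priori} $\phi$ might perform a global up/down swap, and one must check that such a swap is realized by the orientation-reversing involution in $K$ rather than by some non-geometric choice. Excluding $N$ from the statement is exactly what permits this ambiguity to remain; it will be eliminated later in the proof of Theorem~\ref{thm:fr} using the non-symmetric curves $nl,nr,nd$.
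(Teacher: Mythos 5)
Your overall shape — normalize on $P$ via Lemma~\ref{lemma:compact_adj_rel_p} and Corollary~\ref{lemma:fr_top_typ}, then pin down the remaining images using disjointness with $P$ and adjust by twists — is the same as the paper's. But the proposal contains a genuine misconception and leaves the technical heart of the argument unjustified.

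\textbf{The misconception.} You assert that the back-front involution ``simultaneously swaps $U\leftrightarrow D$ and $L\leftrightarrow R$,'' and in the final paragraph you present a possible ``global up/down swap'' as the main obstacle, to be accommodated by $K$ and only later resolved using $N$. This is not correct. The curves in $N$ are singled out precisely because they are the \emph{only} curves of $F_R$ not preserved by the back-front involution $\iota$; in particular $\iota$ fixes each $u_k$, $d_k$, $l_k$, $r_k$ individually. There is therefore no element of $K$ realizing a $U\leftrightarrow D$ swap: a homeomorphism swapping $u_k$ with $d_k$ would have to exchange $p_k^+$ with $p_k^-$, hence cannot fix $P$ pointwise. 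So if after normalizing on $P$ the map $\phi$ really did send $u_k\mapsto d_k$, it would \emph{not} be induced by any mapping class and the lemma would be false. The good news is that the swap is impossible and you do not need $N$ to rule it out: once $\phi|_P=\mathrm{id}$, simpliciality forces $\phi(u_k)$ to be disjoint from every $P$-curve except $p_k^+$, which places it in the four-holed sphere bounded by $p_{k-1}^+,p_k,p_{k+1},p_{k+1}^+$, whereas $\phi(d_k)$ is forced into a different four-holed sphere. The residual ambiguity that $N$ actually resolves, in the proof of Theorem~\ref{thm:fr}, is that $\iota$ itself fixes all of $F_R\setminus N$; that is a question of \emph{uniqueness}, not of whether an inducing $h$ exists, and is outside the scope of this lemma.

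\textbf{The gap.} The ``locating'' and ``assembly'' steps are asserted rather than proved, and they are where the work lies. Knowing that $\phi(v)$ is disjoint from the $P$-curves disjoint from $v$ only confines $\phi(v)$ to a subsurface $\Sigma_v$; to pin it down further you must also know that $\phi(v)$ \emph{intersects} the $P$-curves that $v$ intersects, and you must know the isotopy classes of the arcs in which neighboring images such as $\phi(c_{k\pm1})$ or $\phi(c_{g+1})$ meet $\Sigma_v$. Establishing these intersections (the paper's items (1)--(4)) is the nontrivial content; it uses the dimension of $\nonsep(S)$ and careful pants-decomposition substitutions, and your sketch omits it entirely. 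Likewise, the claim that the per-vertex ambiguities ``must be compatible across all of $F_R\setminus N$'' and amount to a single $k\in K$ is not automatic. The paper arranges this by an inductive normalization: first fix $\phi(c_1)=c_1$ by composing with twists along $p_1,p_2,p_2^\pm$, then fix $c_2,\dots,c_g$ by further twists that leave the earlier curves alone, after which $\phi(c_{g+1})$, $\phi(u_k)$, $\phi(d_k)$, $\phi(l_k)$, $\phi(r_k)$ are forced outright with no further adjustment. As a minor point, your description of $\Sigma_{c_k}$ as ``the one-holed-torus component determined by $p_k,p_{k+1}$'' is also off; for $c_1$ the relevant subsurface is the $S_{1,2}$ bounded by $p_2^+$ and $p_2^-$.
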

\begin{proof}
  By Corollary \ref{lemma:fr_top_typ} there exists $h\in \mcg(S)$ such that $h\circ \phi$ fixes every $p\in P$; we rename $h\circ \phi$ to $\phi$. Recall that by Remark $\ref{rmk:dist_two}$ we know $\phi$ is injective.

  First, we find a homeomorphism that agrees with $\phi$ on $c_1\in C$. Observe that $\phi(c_1)$ is contained in the torus $T_1$ bounded by $p_2^+$ and $p_2^-$ (see Figure \ref{fig:torus_tone}). Moreover,  Lemma \ref{lemma:detect_intersection_closed} imply there exist disjoint arcs 
  $a\in \phi(c_2)\cap T_1$ and $\tilde{a}\in \phi(c_{g+1})\cap T_1$. Notice both arcs have endpoints in $p_2^+$ and $p_2^-$, as otherwise $\phi(c_1)$ would not intersect $p_1$, contradicting Lemma \ref{lemma:detect_intersection_closed}. It follows that $\phi(c_1)$ is the curve contained in the annulus $T_1\setminus (a\cup \tilde{a})$. Even more, there exists a twist $h'\in \mcg(S)$ along $p_1$ and $p_2$, such that $h'\circ \phi(c_1)=c_1$. We rename $h'\circ \phi$ to $\phi$.

  \begin{figure}[h!]
    \centering
    \includegraphics[width=0.45\linewidth]{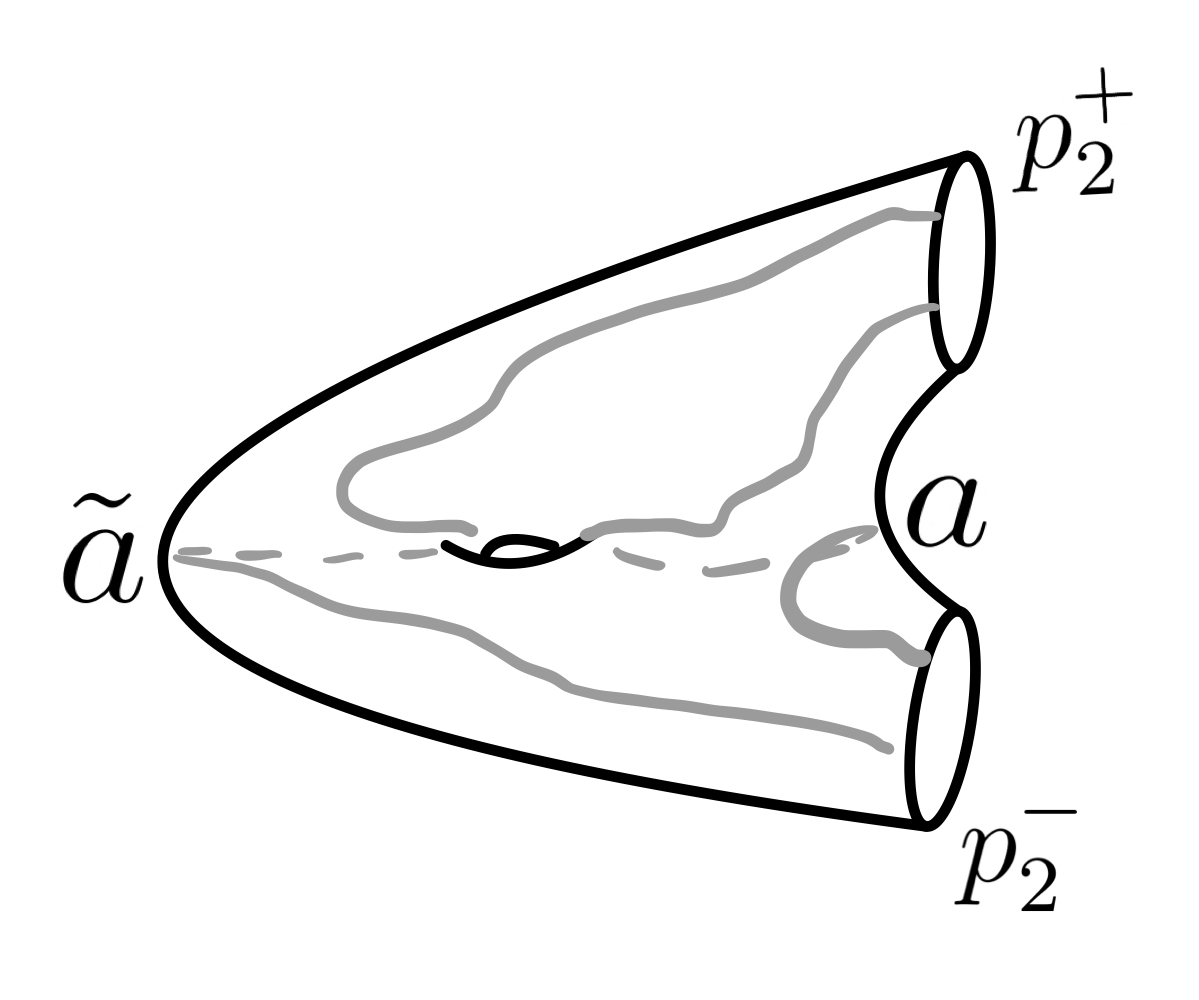}
    \caption{Torus $T_1$.}
    \label{fig:torus_tone}
  \end{figure}

  The same argument with minor changes yield homeomorphisms that agree with $\phi$ on every $c_k\in C\setminus\{c_{g+1}\}$. Thus, we may assume that $\phi$ fixes every curve in $ P\cup C\setminus\{c_{g+1}\}\subset F_R$. 
  
  To finish the proof, we are going to check that $\phi$ is fixing $c_{g+1}$ and every curve in $U \cup D \cup L\cup R$:
\begin{itemize}
  \item Notice that $\phi(c_{g+1})$ is contained in the torus with boundary $T' = S\setminus \bigcup_{i=2}^gp_i$. In fact, $\phi(c_{g+1})$ is the unique curve contained in the annulus $S'\setminus\bigcup_{k=1}^g c_k$, i.e, $\phi(c_{g+1})=c_{g+1}$. 
  \item To prove $\phi$ fixes $U$, consider $u_k\in U$. Observe that $\phi(u_k)$ is contained in the sphere $S_k^+$ bounded by the curves $p_{k-1}^+,\,p_k,\, p_{k+1}$ and $p_{k+1}^+$. Moreover, $\phi(u_k)$ must be the only curve in the annulus $S_k^+\setminus (c_k\cup c_{g+1})$, i.e, $\phi(u_k)=u_k$.
  \item To prove $\phi$ fixes $D$, consider $d_k\in U$. Notice that $\phi(d_k)$ is contained in the sphere $S_k^-$ bounded by the curves $p_{k-1}^-,\,p_k,\, p_{k+1}$ and $p_{k+1}^-$. Now, $\phi(d_k)$ must be the only curve in the annulus $S_k^-\setminus \{c_k, \,c_{g+1}\}$, i.e, $\phi(d_k)=d_k$.
  \item To prove $\phi$ fixes $L$, consider the curve $l_k\in L$. The image $\phi(l_k)$ is a curve in the subsurface $S_l$ bounded by $p_{k-1}^-,\,p_k,\, p_k^+,\, p_{k+1}^-$ and $p_{k+1}^+$. Since $\phi(l_k)$ is disjoint from curves in $C$, we know that $\phi(l_k)$ is contained in the pair of pants $Q = S_l\setminus (c_k\cup c_{k+1})$. Note that $Q$ has only one boundary component not already in $P$, and so we conclude $\phi(l_k)=l_k$. Naturally, a similar argument yields that $\phi$ fixes $r_k\in R$. 

\end{itemize}

  Summarizing, we have proven that there exists a mapping class $f\in \mcg(S)$ such that $f\circ \phi$ fixes $F_R\setminus N$. In other words, $f^{-1}|_{F_R\setminus N} = \phi|_{F_R\setminus N}$. 
\end{proof}

To finish the proof of Theorem \ref{thm:fr}, we need to check that we can take $\phi$ to agree with a homeomorphism on $N\subset F_R$, and that such homeomorphism is unique. Before doing so, we require one more definition:

The \emph{back-front (orientation reversing) involution} is the unique non-trivial mapping class  $\iota \in \mcg(S)$ fixing every curve $c\in P \cup C$.

\begin{proof}[Proof of Theorem \ref{thm:fr}  for closed surfaces]
  Let $\phi: F_R \to \nonsep(S)$ be a \lis map. By Lemma \ref{lemma:aaa} there exists $h\in \mcg(S)$ such that $h \circ \phi$ fixes $F_R\setminus N$, we rename $h\circ \phi$ as $\phi$. 
  
  Consider $nd\in N$ and notice $\phi(nd)$ is a curve in the genus two surface $T$ bounded by the curves $p_2^+,\, p_3,\, p_4,\, \dots, p_{g-1},\, p_{g-1}^+$. Since $\phi(nd)$ is disjoint from $C$, then $\phi(nd)$ is a curve in  $Q=T\setminus \bigcup_{c\in C} c$. Note that $Q$ is the disjoint union of two pairs of pants and it has only two boundary components not contained in $C$, namely, $nd$ and $\iota(nd)$, where $\iota$ is the back-front involution. It follows that $\phi(nd)\in \{nd,\, \iota(nd)\}$. Thus, by precomposing by $\iota$ if necessary, we may assume $\phi$ fixes $F_R\setminus \{nl, \, nr\}$.

  We continue by proving that $\phi$ fixes $nl$. Note that $\phi(nl)$ is a curve in the torus $T_1$ bounded by $p_2^-$ and $p_2^+$. Even more, $\phi(nl)$ is contained in $Q=T_1\setminus (c_1\cup nd)$, which is the union of an annulus and a pair of pants. Note that only one curve in $Q$ is not a curve in $P\cup C$, and so we deduce $\phi(nl)=nl$. An analogous argument leads to the conclusion $\phi(nr) =nr$. 

  So far, we  have found a composition of mapping classes $f\in \mcg(S)$ such that $f \circ \phi$ is the identity in $F_R$. Therefore, $\phi$ is induced by the mapping class $f^{-1}$.

  To prove the uniqueness of the inducing mapping class, suppose $f,\tilde{f}\in \mcg(S)$ both induce the same $\phi: F_R\to \nonsep(S)$. Since $g = \tilde{f}\circ f^{-1}$ fixes the curves $(P\cup C)\subset F_R,$ the Alexander method implies that $g$ is either the identity or the back-front orientation reversing involution. However, since $g$ also fixes the curve $nd$, then $g$ is the identity and  $f=\tilde{f}$. 
\end{proof}

\section{Finite rigid exhaustion for closed surfaces}\label{sec:exh_no_puncs}

In this section we prove that for any closed surface $S$ of genus $g\geq 3$, there exist subcomplexes $F_{1} \subset F_{2} \subset  \dots \subset \nonsep(S)$ such that each $F_{i}$ is a finite rigid set with trivial pointwise stabilizer and \[\bigcup_{i=1}^\infty F_{i} = \nonsep(S).\] 

The strategy to produce an  exhaustion is to first extend  $F_R$ to a larger finite rigid set $F_{1}$ with desirable properties. Then, the set $F_{1}$ will work as base case for an induction  that  enlarges $F_{i}$ into $F_{i+1}$. This method heavily resembles the proof given by Aramayona and Leininger in \cite{aramayona_exhausting_2016} to produce an exhaustion of the curve complex.

The plan of the proof is summarized in the following lemma (cf. \cite[Lemma 3.13]{aramayona_exhausting_2016}):

\begin{lemma}\label{lemma:create_exh}
  Let $S$ be a finite-type surface and let $F_{1} \subset \nonsep(S)$ be a a finite rigid set with trivial pointwise stabilizer and $\{h_1, \dots, h_k\}$ a set of generators of $\mcg(S)$. If $F_{1}\cup h_j(F_{1})$ is a  finite rigid set with trivial pointwise stabilizer for every $j$, then the sets   \[
     F_{i+1} = F_{i} \cup \bigcup_{j=1}^k \left( h_j(F_{i}) \cup  h_j^{-1}(F_{i}) \right),
  \]
  satisfy that $F_{1} \subset F_{2} \subset \dots \subset \nonsep(S)$, $\bigcup F_i  = \nonsep(S)$ and every $F_{i}$ is finite rigid with trivial pointwise stabilizer. 
\end{lemma}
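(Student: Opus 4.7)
The plan is to prove each of the three conclusions in turn, with the bulk of the work going into the rigidity claim, which we establish by induction on $i$.

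The chain $F_{1} \subset F_{2} \subset \cdots$ is immediate from the recursive definition. For the exhaustion, observe inductively that $F_{i+1}$ contains the translate $g(F_{1})$ for every $g \in \mcg(S)$ of word length at most $i$ in the generators $h_{1}^{\pm 1}, \ldots, h_{k}^{\pm 1}$. Since these generate $\mcg(S)$, every translate $g(F_{1})$ belongs to some $F_{i}$. As long as $F_{1}$ contains a simplex from each $\mcg(S)$-orbit of simplices in $\nonsep(S)$ -- a property that can be verified for the concrete finite rigid set $F_{R}$ built in the previous section -- the change of coordinates principle implies that every simplex of $\nonsep(S)$ lies in some $g(F_{1})$, hence in some $F_{i}$.

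The core argument is the inductive step for rigidity. Let $\phi: F_{i+1} \to \nonsep(S)$ be a \lis map. By the inductive hypothesis, $\phi|_{F_{i}}$ is induced by a unique $h \in \mcg(S)$; replacing $\phi$ by $h^{-1} \circ \phi$, we may assume $\phi|_{F_{i}}$ is the identity. Fix $j \in \{1, \ldots, k\}$ and $\epsilon \in \{\pm 1\}$. The subcomplex $F_{1} \cup h_{j}^{\epsilon}(F_{1})$ is either equal to (when $\epsilon = 1$) or an $\mcg(S)$-translate of (when $\epsilon = -1$, via $F_{1} \cup h_{j}^{-1}(F_{1}) = h_{j}^{-1}(F_{1} \cup h_{j}(F_{1}))$) the set $F_{1} \cup h_{j}(F_{1})$, and is therefore finite rigid with trivial pointwise stabilizer. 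Hence $\phi|_{F_{1} \cup h_{j}^{\epsilon}(F_{1})}$ is induced by a unique $g_{j} \in \mcg(S)$. Since $g_{j}|_{F_{1}} = \phi|_{F_{1}} = \operatorname{id}|_{F_{1}}$ and $F_{1}$ has trivial pointwise stabilizer, we conclude $g_{j} = \operatorname{id}$, so $\phi$ is the identity on $h_{j}^{\epsilon}(F_{1})$. Applying the inductive hypothesis to the translate $h_{j}^{\epsilon}(F_{i})$, itself finite rigid with trivial pointwise stabilizer, we obtain a unique $f_{j} \in \mcg(S)$ inducing $\phi|_{h_{j}^{\epsilon}(F_{i})}$. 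Since $f_{j}$ fixes $h_{j}^{\epsilon}(F_{1})$ pointwise, conjugation shows that $h_{j}^{-\epsilon} f_{j} h_{j}^{\epsilon}$ fixes $F_{1}$ pointwise and hence equals the identity, so $f_{j} = \operatorname{id}$. Thus $\phi$ is the identity on each $h_{j}^{\epsilon}(F_{i})$, and combining these pieces gives $\phi = \operatorname{id}$ on all of $F_{i+1}$; the original $\phi$ is therefore induced by $h$. Uniqueness of the inducing mapping class follows from the trivial pointwise stabilizer of $F_{1} \subset F_{i+1}$.

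The main obstacle is ensuring that the mapping classes obtained from the rigidity of the various pieces -- namely $F_{i}$, $F_{1} \cup h_{j}^{\epsilon}(F_{1})$, and $h_{j}^{\epsilon}(F_{i})$ -- are consistent with one another. The repeated use of $F_{1}$ as a rigid anchor with trivial pointwise stabilizer is precisely what forces this compatibility: every auxiliary mapping class is pinned down by its action on a translate of $F_{1}$, which in turn is determined by the normalization $\phi|_{F_{1}} = \operatorname{id}$. Once this bookkeeping is in place, the induction closes with no further difficulty.
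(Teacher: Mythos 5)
Your proof is correct and follows the same underlying strategy as the paper's: exploit the trivial pointwise stabilizer of $F_1$ (and of its translates) to force the mapping classes obtained from the rigidity of the various pieces to agree. The inductive step is organized a bit differently, though. The paper observes directly that $F_1 \subset F_i \cap h_j(F_i)$ — which holds for $i \geq 2$ since $h_j^{-1}(F_1)\subset F_2\subset F_i$, but not necessarily for $i=1$ — and therefore treats $F_2$ as a separate base case using the stated hypothesis on $F_1 \cup h_j(F_1)$. You instead run a uniform induction: first use rigidity of $F_1\cup h_j^\epsilon(F_1)\subset F_{i+1}$ to pin $\phi$ down as the identity on $h_j^\epsilon(F_1)$, then apply the inductive hypothesis to the translate $h_j^\epsilon(F_i)$ and conjugate back to $F_1$. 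Both are valid; yours avoids the separate base case at the cost of one extra bootstrap.

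You also spell out the chain inclusion and exhaustion, which the paper's proof leaves implicit. Be aware that the condition you invoke for exhaustion — that $F_1$ contains a simplex from each $\mcg(S)$-orbit of simplices — is not among the lemma's stated hypotheses and need not follow from finite rigidity alone; it would have to be supplied by the specific construction of $F_1=F_R\cup A$. At the level of vertices the exhaustion is automatic (all nonseparating curves form a single $\mcg(S)$-orbit and every word in the generators eventually appears), and that is what is actually used downstream, so this is a cosmetic point rather than a genuine gap. The core rigidity argument, which is what the lemma is really for, is sound.
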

\begin{proof}
  First, notice that if $F_{1}\cup h_j(F_{1})$ is a finite rigid set with trivial stabilizer, then the same holds for $F_{1}\cup h_j^{-1}(F_{1})$.

  We will now check that $F_{2}$ is finite rigid with trivial stabilizer: take $\phi: F_{2} \to \nonsep(S)$ and observe that by hypothesis there exists $f_j \in \mcg(S)$ such that \[\phi|_{F_{1}\cup h_j(F_{1})} = f_j|_{F_{1}\cup h_j(F_{1}),} \]for every $j\in \{1,\dots,k\}$. Since $F_{1}$ has trivial pointwise stabilizer and \[f_j |_{F_{1}} = \phi|_{F_{1}}=  f_k |_{F_{1}},\] we deduce  $f_j = f_k$ for all $j,k$. Clearly, the same argument holds when considering $F_{1} \cup h_j^{-1}(F_{1})$. It follows that $\phi = f$ for $f\in \mcg(S)$ and  $F_{2}$ is a finite rigid set with trivial pointwise stabilizer.

  For $F_{i+1}$ we proceed by induction. Consider $\phi: F_{i+1}\to \nonsep(S)$, by induction we have $\phi|_{F_{i}} = f|_{F_{i}}\in \mcg(S)$ and \[\phi|_{h_j(F_{i})} = f_j|_{h_j(F_{i})}\] for some $f_j\in \mcg(S)$. Observe that $F_{1} \subset F_{i} \cap h_j(F_{i})$ and $F_{1}$ is trivially stabilized, thus $f=f_j$ for every $j$ and so  $F_{i+1}$ is finite rigid with trivial stabilizer.
\end{proof}

To produce the finite rigid exhaustion of $\nonsep(S)$ we are going to use the previous lemma. In this direction, we enlarge $F_R$ into a set $F_{1}$ and provide a set of generators for $\mcg(S)$.

  \subsection{Enlarging the finite rigid set}\label{subsec:enlarging_finite_rigid_set_compact}
  
  Let $\iota \in \mcg(S)$ be the back-front orientation reversing involution. We define the set of curves $A := A_1 \cup A_2$, where:
  
  \[A_1 := \bigcup_{k=1}^{g-1} \partial(c_k, p_{k+1}, c_{k+1}).\] Note that the set $\partial(c_k, p_{k+1}, c_{k+1})$ has two curves, one curve in the front component and one curve in the back component of the surface. We will call $c_{k,k+1}$ the curve in the front component, so \[\partial(c_k, p_{k+1}, c_{k+1})=\{c_{k,k+1}, \,\iota(c_{k,k+1})\}.\]

We proceed to define $A_2$. Consider the torus $T$ bounded by  $p_{k-1}^-,\,p_{k+1}^-$ and $u_k$. Now, there is only one curve in the pair of pants $T\setminus (c_{k-1} \cup c_k \cup \iota(c_{k-1,\,k}))$ that is not already in $P\cup C$, denote this curve by $nl_k$ (see Figure \ref{fig:A2_nopuncs}). Analogously, let  $nr_k$ be the unique curve in $T\setminus (c_k\cup c_{k+1} \cup \iota (c_{k,\, k+1}))$ that is not already a curve in $P\cup C$ (see Figure \ref{fig:A2_nopuncs}). We set \[A_2 = \{nl_{k}, nr_{k}|\; k\in \{2, \dots, g-1\}\}.\]

  \begin{figure}[h!]
    \centering
    \begin{subfigure}{.45\linewidth}
      \centering
      \includegraphics[width=0.8\linewidth]{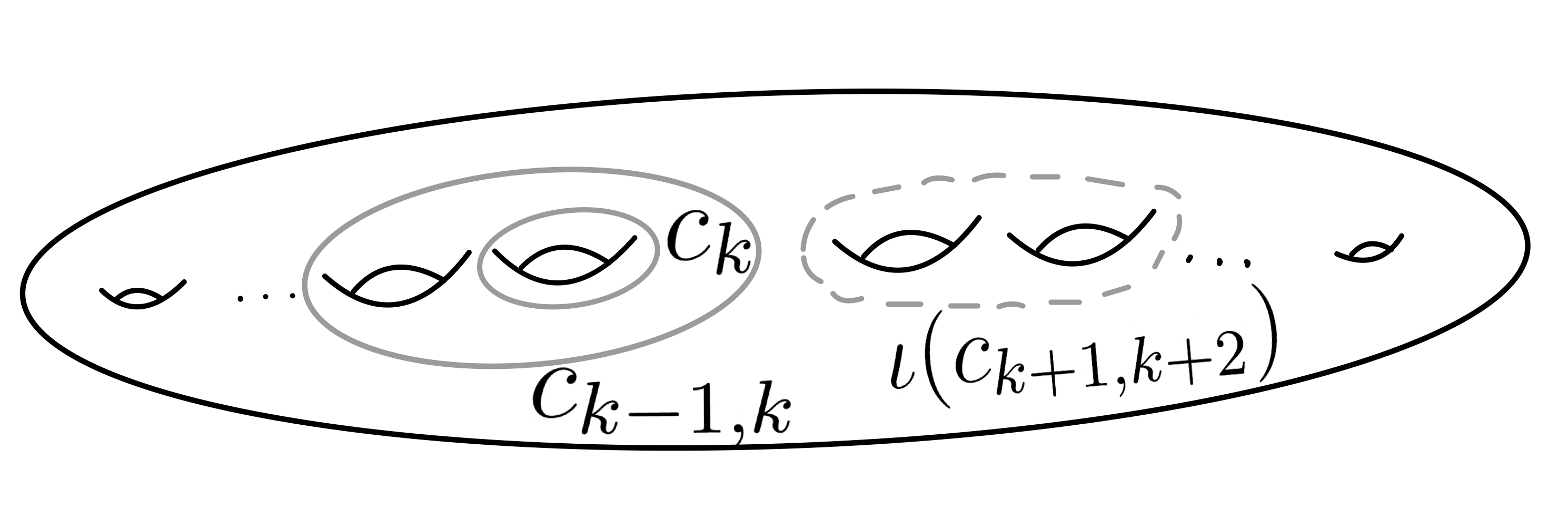}
      \caption{Curves in $A_1$.}
      \label{fig:A1_nopuncs}
    \end{subfigure}
    \begin{subfigure}{.45\linewidth}
      \centering
      \includegraphics[width=0.8\linewidth]{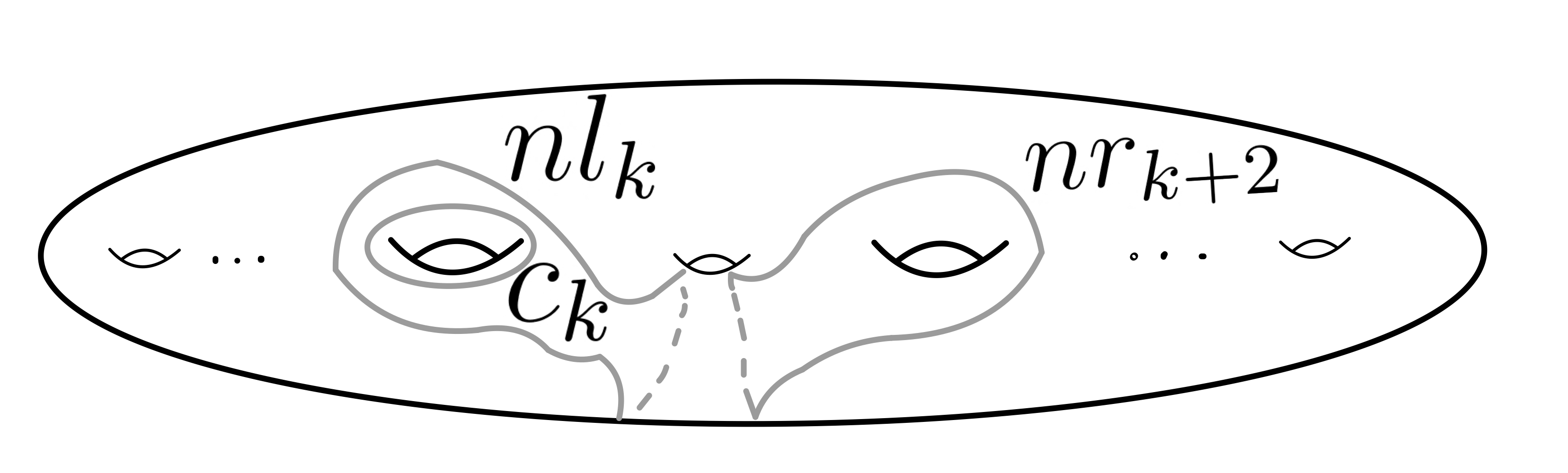}
      \caption{Curves in $A_2$.}
      \label{fig:A2_nopuncs}
    \end{subfigure}
    \caption{Curves in $A$ for a closed surface.}
  \end{figure}

  \begin{lemma}\label{lemma:enlarge_fr_compact}
    Let $S$ be a closed surface of genus $g\geq 3$. The subcomplex spanned by $F_R \cup A \subset \nonsep(S)$ is a finite rigid set with trivial pointwise stabilizer.
  \end{lemma}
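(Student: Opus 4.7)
The plan is to bootstrap from the finite rigidity of $F_R$ already established in the previous section. Let $\phi: F_R\cup A \to \nonsep(S)$ be a locally injective simplicial map. Since the star in $F_R\cup A$ of any vertex of $F_R$ contains its star in $F_R$, the restriction $\phi|_{F_R}$ is locally injective simplicial, so by Theorem \ref{thm:fr} for closed surfaces there is a unique $h\in \mcg(S)$ with $h|_{F_R}=\phi|_{F_R}$. Composing with $h^{-1}$, we may assume throughout that $\phi$ fixes $F_R$ pointwise, and it remains only to show that $\phi$ also fixes each curve of $A_1$ and $A_2$. The uniqueness of the inducing mapping class is then automatic: any $g\in \mcg(S)$ fixing $F_R\cup A$ pointwise in particular fixes $F_R$ pointwise, and by Theorem \ref{thm:fr} this forces $g=1$.

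For a curve $c_{k,k+1}\in A_1$, the idea is that $\phi(c_{k,k+1})$ is pinned down by the simplicial constraints coming from $F_R$. Since $\phi$ is simplicial and fixes $c_k, p_{k+1}, c_{k+1}\in F_R$, the curve $\phi(c_{k,k+1})$ is disjoint from these three curves and hence lies in $\partial(c_k, p_{k+1}, c_{k+1}) = \{c_{k,k+1}, \iota(c_{k,k+1})\}$. To rule out $\iota(c_{k,k+1})$, I would exploit the non-symmetric curves of $N\subset F_R$: the two candidates are interchanged by the front-back involution $\iota$, whereas at least one of $nl, nr, nd$ has differing intersection/disjointness behaviour with respect to $c_{k,k+1}$ and $\iota(c_{k,k+1})$. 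Any disjointness of $c_{k,k+1}$ with a curve of $N$ yields a $1$-simplex in $F_R\cup A$ whose image under $\phi$ forces $\phi(c_{k,k+1})$ to be disjoint from the same (fixed) curve of $N$, eliminating $\iota(c_{k,k+1})$. For the curves $nl_k, nr_k\in A_2$, I expect an identical argument to work: each is one of finitely many boundary components of a regular neighborhood of already-fixed curves of $F_R$, and the non-symmetric curves in $N$ plus the up/down curves $U\cup D$ discriminate the correct boundary.

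I would carry out the argument in the order: (i) reduce to $\phi|_{F_R}=\mathrm{id}$; (ii) for each $c_{k,k+1}$, identify its possible images as $\{c_{k,k+1}, \iota(c_{k,k+1})\}$ via the pants-neighborhood argument, then pick the correct one using $N$; (iii) repeat (ii) for $nl_k, nr_k$, using a suitable subsurface bounded by already-fixed curves to confine $\phi(nl_k)$ to a small finite list of candidates, and then applying the injectivity of $\phi$ together with disjointness from $N$-curves to isolate the correct one; (iv) observe that the uniqueness statement follows for free.

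The main obstacle I anticipate is the bookkeeping in step (ii)-(iii): one has to verify, in each case (including the boundary indices $k=2$ and $k=g-1$), that the chosen distinguishing curve of $N$ actually has the claimed asymmetric disjointness pattern with the two candidates and that this disjointness is realised by a genuine $1$-simplex of $F_R\cup A$. This is essentially a picture-chasing exercise that will require careful inspection of the figures, but presents no conceptual difficulty beyond that already handled in the proof of Theorem \ref{thm:fr}.
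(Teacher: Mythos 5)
Your high-level structure matches the paper: reduce to the case $\phi|_{F_R}=\mathrm{id}$, pin down $A_1$, then pin down $A_2$ using the already-fixed $A_1$ curves, and observe uniqueness comes for free. However, your mechanism for pinning down $A_1$ has a genuine gap.

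You claim that since $\phi(c_{k,k+1})$ is disjoint from the fixed curves $c_k, p_{k+1}, c_{k+1}$, it must lie in $\partial(c_k,p_{k+1},c_{k+1})=\{c_{k,k+1},\iota(c_{k,k+1})\}$. This inference is false: a curve disjoint from $c_k\cup p_{k+1}\cup c_{k+1}$ merely lies in the complement of a regular neighborhood of that union, which for $g\geq 3$ is a large subsurface (roughly genus $g-2$ with two boundary components) containing infinitely many isotopy classes of essential curves, almost none of which are boundary components of the neighborhood. Being a component of $\partial(\cdot)$ is a vastly stronger condition than being disjoint from the defining curves, so the two candidates $\{c_{k,k+1},\iota(c_{k,k+1})\}$ are not isolated by this argument.

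The paper's actual mechanism is to exploit disjointness from a much larger collection of already-fixed $F_R$-curves: for example, $\phi(\iota(c_{1,2}))$ is disjoint from $c_1,c_2,c_3,p_2,p_3^+,p_3^-$, and this cuts out a complementary component that is a pair of pants; since every essential curve in a pair of pants is isotopic to a boundary component, $\phi(\iota(c_{1,2}))$ is confined to three candidates, after which disjointness from $nd$ and injectivity finish the job. Also note the order is reversed from what you propose: the paper determines $\phi(\iota(c_{k,k+1}))$ first (using the $N$-curve), and then recovers $\phi(c_{k,k+1})$ from disjointness with the now-fixed $\iota(c_{k,k+1})$. Your treatment of $A_2$ is closer to correct, since there you explicitly invoke confinement to a small subsurface bounded by already-fixed curves; you would need to apply the same idea, not the weaker $\partial(\cdot)$ claim, to $A_1$ as well.
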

  \begin{remark}
    In the  argument below we sometimes abuse notation using $ p_1^+= p_1^- = p_1$ and $ p_g^+=  p_g^- = p_{g+1}$.
  \end{remark}
  \begin{proof}
    Take $\phi: F_R \cup A \to \nonsep(S)$ a \lis map. By  Theorem \ref{thm:fr} in the closed case, there is an $h\in \mcg(S)$ such that $h\circ \phi$ fixes $F_R$; we rename $h\circ \phi$ to $\phi$. Furthermore, the subcomplex $F_R \cup A$ has diameter two, so $\phi$ is injective.

    First, we  check $\phi$ fixes  $A_1$. Note that $ \phi(\iota (c_{1,2}))$ is contained in the torus $T$ bounded by the curves $p_2,\, p_3^+$ and $p_3^-$. Since $\phi(\iota(c_{1,2}))$ is disjoint from the curves in $C$, we have that  $ \phi(\iota (c_{1,2}))$  is contained in the pair of pants $Q=T\setminus (c_1\cup c_2\cup c_3)$. Notice that only one curve in $Q$ is non-separating and disjoint from $nd$, namely $\iota(c_{1,2})$ and so  $\phi(\iota(c_{1,2}))=\iota(c_{1,2})$. Now, $\phi(c_{1,2})$ is the unique non-separating curve in $Q$ distinct from $\iota(c_{1,2})$, that is, $\phi(c_{1,2})=c_{1,2}$.
     
    By an obvious modification of the above argument one checks  that \[\phi(c_{k,k+1}) = c_{k,k+1}\text{ and }\phi(\iota  (c_{k,k+1}))=\iota(c_{k,k+1}),\]  for $k\leq g-1$.

    We are left to check that $A_2$ is also fixed by $\phi$. Take   $nr_k\in A_2$; we have that $\phi(nr_k)$ is contained in the torus $T$ bounded by the curves $p_{k-1}^-,\, p_{k+1}^-$ and $u_k$. Also, $\phi(nr_k)$ is disjoint from $c_k$, $c_{k+1}$ and $\iota(c_{k,k+1})$, thus $\phi(nr_k)$ is a curve in the pair of pants $Q=T\setminus (c_k\cup c_{k+1} \cup \iota(c_{k,k+1}))$. Notice that the only curve in $Q$ distinct from curves in $P \cup C$ is the curve $nr_k$, thus we have $\phi(nr_k) = nr_k$. For $nl_k \in A_2$ the argument is analogous.

    We have proven that if $\phi$ fixes $F_R$, then it fixes $F_R \cup A$. The statement follows immediately.
  \end{proof}
  
  We define $F_{1}:= F_R \cup A$ and proceed to prove that it satisfies the hypotheses of Lemma \ref{lemma:create_exh}. 

  \subsection{Constructing the exhaustion for closed surfaces}

  Let $\iota$ be the back-front orientation reversing involution and let $\delta_\alpha$ be the right Dehn twist along the curve $\alpha$. The set of Dehn twists \[H := \{\delta_{p_1}, \delta_{p_2}, \dots, \delta_{p_{g+1}}, \delta_{{p_2^-}}\} \cup \{\delta_{c_1}, \dots, \delta_{c_k}\}\] are known as the \emph{Humphries generators}, which are known to generate the group of orientation preserving mapping classes $\mcgo(S)$  \cite{humphries_generators_1979}. Hence, $H \cup \{\iota\}$ generates $\mcg(S)$. 

  \begin{lemma}\label{lemma:fr1_as_in_lemm}
    Let $h\in H\cup \{\iota\}$. The set $F_{1} \cup h(F_{1})$ is a finite rigid set with trivial pointwise stabilizer. 
  \end{lemma}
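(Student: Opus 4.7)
The strategy, modeled on \cite{aramayona_exhausting_2016}, is to apply the rigidity of $F_{1}$ (Lemma \ref{lemma:enlarge_fr_compact}) twice and then verify compatibility on the overlap $F_{1}\cap h(F_{1})$.

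Given a locally injective simplicial map $\phi:F_{1}\cup h(F_{1})\to \nonsep(S)$, rigidity of $F_{1}$ supplies a unique $f\in\mcg(S)$ with $\phi|_{F_{1}}=f|_{F_{1}}$. Replacing $\phi$ by $f^{-1}\circ \phi$, we may assume $\phi$ fixes $F_{1}$ pointwise; it then suffices to show that $\phi$ also fixes $h(F_{1})$ pointwise. To this end, note that $\phi\circ h:F_{1}\to \nonsep(S)$ is again locally injective and simplicial, since $h$ is a simplicial automorphism of $\nonsep(S)$. Rigidity of $F_{1}$ applied a second time yields $g\in \mcg(S)$ with $(\phi\circ h)|_{F_{1}}=g|_{F_{1}}$, so $\phi|_{h(F_{1})}=(gh^{-1})|_{h(F_{1})}$. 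Evaluating at any $v\in F_{1}\cap h(F_{1})$ forces $v=\phi(v)=gh^{-1}(v)$, so $gh^{-1}$ fixes $F_{1}\cap h(F_{1})$ pointwise.

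The proof therefore reduces to verifying, case-by-case for each generator $h\in H\cup\{\iota\}$, that the pointwise stabilizer of $F_{1}\cap h(F_{1})$ in $\mcg(S)$ is trivial, as this forces $gh^{-1}=1$ and hence $\phi|_{h(F_{1})}=\mathrm{id}|_{h(F_{1})}$. For a Humphries twist $h=\delta_{\alpha}$, every curve of $F_{1}$ disjoint from $\alpha$ belongs to $F_{1}\cap \delta_{\alpha}(F_{1})$; since $\alpha\in P\cup C$, the curves of $A$ furnish disjoint substitutes for the handful of curves of $F_{R}$ meeting $\alpha$, so that inside $F_{1}\cap h(F_{1})$ we recover a configuration playing the role of $\{p_{1},c_{1},\dots,p_{g},c_{g},p_{2}^{-},nd\}$ from the final paragraph of the proof of Theorem \ref{thm:fr}, and the Alexander method trivializes the stabilizer. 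For $h=\iota$, the intersection $F_{1}\cap \iota(F_{1})$ is even larger, since $\iota$ preserves $P$, $C$, $U\cup D$, $L\cup R$, $A_{1}$ and $A_{2}$ setwise; in particular the curves of $A_{2}$ are individually swapped by $\iota$, so they lie in the intersection and kill the residual $\langle \iota\rangle$-ambiguity left over after applying the Alexander method to the symmetric part of $F_{R}$.

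The main obstacle is exactly this last combinatorial check: for each of the finitely many generators one must exhibit enough curves in $F_{1}\cap h(F_{1})$ that the Alexander method collapses the pointwise stabilizer all the way to $\{1\}$, and not merely to $\langle\iota\rangle$. Once this is in place, $g=h$, so $\phi$ fixes $h(F_{1})$ pointwise, and the original $\phi$ is induced on $F_{1}\cup h(F_{1})$ by the mapping class $f$; the uniqueness of $f$ is immediate from the trivial pointwise stabilizer of $F_{1}\subset F_{1}\cup h(F_{1})$.
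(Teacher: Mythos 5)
Your setup is correct up to the sentence ``so $gh^{-1}$ fixes $F_{1}\cap h(F_{1})$ pointwise,'' and this is exactly the paper's framing (with $f=gh^{-1}$). The gap is in the next step: you then reduce the lemma to showing that the pointwise stabilizer of $F_{1}\cap h(F_{1})$ in $\mcg(S)$ is trivial, and this is simply false when $h=\delta_{\alpha}$ is a Humphries twist. Every curve $v\in F_{1}\cap\delta_{\alpha}(F_{1})$ is fixed by $\delta_{\alpha}$ (those disjoint from $\alpha$ tautologically, and $\alpha$ itself as well), so $\langle\delta_{\alpha}\rangle$ lies inside the pointwise stabilizer; no amount of adding more curves disjoint from $\alpha$ can kill it. In particular, the claim that one can build, inside $F_{1}\cap h(F_{1})$, a configuration ``playing the role of $\{p_{1},c_{1},\dots,p_{g},c_{g},p_{2}^{-},nd\}$'' so that the Alexander method trivializes the stabilizer cannot work: any filling collection entirely disjoint from $\alpha$ fills only the cut surface $S\setminus\alpha$, so the Alexander method applied there pins down $gh^{-1}$ only up to the kernel of the cutting homomorphism, i.e.\ up to $\delta_{\alpha}^{k}$.

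The missing ingredient is the one the paper uses to determine $k$: local injectivity of $\phi$ on an \emph{edge straddling} $F_{1}\setminus h(F_{1})$ and $h(F_{1})\setminus F_{1}$. Concretely, for $h=\delta_{p_{1}}$ one takes $nl\in F_{1}$ and $\delta_{p_{1}}(c_{g+1})\in h(F_{1})$, which are disjoint; applying $\phi$ (which is the identity on the first and $\delta_{p_{1}}^{k}$ on the second) yields $i\bigl(nl,\delta_{p_{1}}^{k+1}(c_{g+1})\bigr)=0$, which forces $k=0$. This use of a simplex connecting the two pieces, rather than only the intersection, is what closes the argument; your proof as written never leaves $F_{1}\cap h(F_{1})$ and therefore cannot rule out the twist power. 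Note that your reasoning does go through for $h=\iota$, since there $F_{1}\cap\iota(F_{1})$ contains $P\cup C\cup A_{1}$, which has trivial pointwise stabilizer in $\mcg(S)$ -- but that is precisely because $\iota$ is not a Dehn twist and the intersection genuinely fills $S$.
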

  \begin{proof}
    First, we prove it for $h=\delta_{p_1}$. Let $\phi: F_{1} \cup h(F_{1}) \to \nonsep(S)$ be  any \lis map.
    
    Since $F_{1}$ is a finite rigid set, we may assume that $\phi$ fixes $F_{1}$ by precomposing with a mapping class. Moreover, since $h(F_{1})$ is also finite rigid, there exists a mapping class $f\in \mcg(S)$ such that \[\phi|_{h(F_{1})}= f|_{h(F_{1})}.\]
    
    Proving $F_{1} \cup h(F_{1})$ is a finite rigid set with trivial pointwise stabilizer  boils down to prove that $f = 1 \in \mcg(S)$.

    Recall that $p_1$ is the  associated curve to the Dehn twist $h=\delta_{p_1}$. Now, let $S' = S\setminus p_1$ and consider the well-known cutting homomorphism (see \cite[Proposition 3.20]{farb_primer_2012}) \[1 \rightarrow \langle \delta_{p_1} \rangle \rightarrow \mcgo(S, p_1) \rightarrow \mcgo(S'), \]
    where $\mcgo(S') \subset \mcg(S')$ is the subgroup of orientation preserving classes and $\mcgo(S,p_1) \subset \mcg(S)$ is the subgroup of orientation preserving  classes that fix $p_1$. Notice that $f$ fixes all the curves in $F_{1} \cap h(F_{1})$,  which  fill $S'$. In particular, $f$ is orientation preserving and fixes $p_1$. Furthermore, by means of the Alexander method, the image of $f$  by the cutting homomorphism is trivial and the above sequence implies that $f=\delta_{p_1}^k$, for some $k\in \mathbb{Z}$. We are left to see that $k=0$.

    Note that $i\left(nl, \delta_{p_1}(c_{g+1})\right)=0$. As $\phi$ is locally injective, we know that $i\left(\phi(nl), \phi\left(\delta_{p_1}(c_{g+1})\right)\right)=0$, which is equivalent to 
    \begin{equation}\label{eq:intersect_0}
      i\left(nl, \delta_{p_1}^{k+1}(c_{g+1})\right)=0,  
    \end{equation}
    since $\phi|_{F_{1}}=\text{id}$ and $\phi|_{h(F_{1})} = \delta_{p_1}^k$.
    
    It can be directly checked that   Equation (\ref{eq:intersect_0}) is satisfied if and only if $k=0$, thus proving the statement of the lemma for $h=\delta_{p_1}$.

    The rest of the cases $h\in H$ are proved in exactly the same way,  but changing  the curves in Equation (\ref{eq:intersect_0}). We list all the cases for completeness:

    \begin{itemize}
      \item If $h=\delta_{p_{g+1}}$, change   $(nl, c_{g+1})$ in (\ref{eq:intersect_0}) by $(c_{g+1}, nr)$;
      
      \item If $h=\delta_{p_i}$ for $i\in\{2,\dots, g-2\}$, change $(nl,c_{g+1})$ by $(nl_i, c_{i,i+1})$;
      
      \item If $h=\delta_{p_{g-1}}$, change $(nl,c_{g+1})$ by  $(c_{g-2,g-1}, nr_{g-1})$;
      
      \item If $h=\delta_{{p_2^-}}$, change $(nl,c_{g+1})$ by $(c_{g+1}, nr_2)$;
      
      \item If $h=\delta_{c_k}$ for $k\in \{1,\dots, g-1\}$, change $(nl,c_{g+1})$ by $(p_{k+1}, nr_{k+1})$;
      
      \item If $h=\delta_{c_g}$, change $(nl,c_{g+1})$ by $(nl_{g-1}, p_g)$.

    \end{itemize}

    To finish the proof, we need to consider the case $h=\iota$. In this case, $F_{1} \cap h(F_{1})$ contains the trivially pointwise stabilized set $P\cup C  \cup A_1$, so it follows immediately that $f=1\in \mcg(S)$. 
  \end{proof}
  
\begin{proof}[Proof of Theorem \ref{thm:exh} for closed surfaces]
  Let $S$ be a closed surface of genus $g\geq 3$. Consider $F_{1}:=F_R\cup A$ and the set of generators $H\cup\{\iota\}$ of the extended mapping class group $\mcg(S)$. By Lemma \ref{lemma:create_exh} we have the desired exhaustion, where the hypotheses have been checked in lemmas \ref{lemma:enlarge_fr_compact} and \ref{lemma:fr1_as_in_lemm}.
\end{proof}

\section{Finite rigid sets for punctured surfaces}\label{sec:fr_puncs}

Let $S=S_{g,n}$ with $n\geq 1$ and  $g\geq 3$. In this section we construct a finite rigid set $F'_R\subset \nonsep(S)$ with trivial pointwise stabilizer. As we will see, the argument to prove the rigidity of $F'_R$ is morally the same  as the one in Section \ref{sec:fr_no_puncs}, although  more involved as it requires to manipulate more curves.

\subsection{Constructing the finite rigid set}\label{sec:constructing_fr_puncs}
We start by naming some curves in the punctured surface $S$.

Consider the closed surface $S_g$ and the pair of pants $Q$ bounded by $p_1,\,p_2, \,p_2^+$. Note that $Q\setminus d_2$ has two connected components, name $A$ the connected component intersected by $c_{g+1}$. Now, $A\setminus (c_1 \cup c_{g+1})$ is the union of two disks one in the front component and one in the back component of $S_g$, name $B$ the disk in the back component $(S_g\setminus \bigcup c_i)^-$. By removing $n$ points from the interior of $B$ we obtain the punctured surface $S$. Via this procedure we get natural analogues  in $S$  of the sets of curves $P$, $C$, $U$, $D$, $L$, $R$ and $N$. 

After this, we define top of the surface $(S\setminus \bigcup p_i)^+$, the bottom $(S\setminus \bigcup p_i)^-$, the front $(S\setminus \bigcup c_i)^+$  and the back $(S\setminus \bigcup c_i)^-$, in the same way we did in Section \ref{sec:fr_no_puncs}. 

The set $P$ is no longer a pants decomposition in $S$; to fix this we add some curves. First, rename $p_2^+$ to $p_{2,n}^+$ and consider the pair of pants $Q$ bounded by $p_1,\,p_2$ and $p_{2,n}^+$. Observe $Q\setminus c_1$ is an annulus with $n$ punctures. Now, fix a set of non-separating  curves $\{p_{2,0}^+,\,p_{2,1}^+, \,\dots,\, p_{2,n-1}^+,\, p_{2,n}^+\}$ in $Q\setminus c_1$  satisfying that $p_{2,i}^+$ and $p_{2,i+1}^+$ bound a once punctured annulus. Adding the curves $\{p_{2,0}^+,\,p_{2,1}^+, \,\dots, p_{2,n-1}^+,\, p_{2,n}^+\}$  to the set $P$, makes it a pants decomposition (see Figure \ref{fig:pant_decomposition_with_puncs}). We will say that the puncture contained in the annulus bounded by $p_{2,k-1}^+$ and $p_{2,k}^+$ is the $k$th puncture.

In this setting, we also define the back-front (orientation reversing) involution as the unique non-trivial element $\iota \in \mcg(S)$ that fixes every curve $P\cup C$.

Let $pl_1$ be the unique curve in the punctured pair of pants bounded by $p_1,\,p_2,\, p_{2,1}^+$ that is disjoint from $c_2$ and is not a curve already in $P$. Similarly, let $pl_k$ be the unique curve in the punctured pair of pants bounded by $pl_{k-1},\,p_2,\,p_{2,k}^+$ that is disjoint from $c_2$ and is not already a curve in $P$ (see Figure \ref{fig:left_curves_with_puncs}). We set \[Pl:=\{pl_1, \dots, pl_n\}.\]

Let $pr_n$ be the unique curve in the punctured pair of pants bounded by $p_{2,n-1}^+,\,p_3,\, p_3^+$ that is disjoint from $c_2$ and is not already a curve in $P$. Inductively, define $pr_{k}$ to be the unique curve in the punctured pair of pants bounded by $p_{2,k-1}^+,\,p_3,\,p_3^+$ that is disjoint from $c_2$ and not already in $P$ (see Figure \ref{fig:right_curves_with_puncs}). We set 
\[Pr := \{pr_1, \dots, pr_n\}.\]

Lastly, we define the set
\[C_* = \partial(c_1,\,p_2,\, c_2) \cup \partial (c_2,\,p_2,\dots,\,p_g,\,c_g).\] Note  the set $\partial(c_1,\,p_2,\, c_2)$ has two curves, one curve in the front component which we will denote by $c_{1,\,g}$ and one curve in the back component that is $\iota(c_{1,g})$. Also, the set $\partial (c_2,\,p_2,\dots,\,p_g,\,c_g)$  has two curves, denote by $c_{2,\,g}$ the curve in the front component and by $\iota(c_{2,\,g})$ the curve in the back component (See Figure \ref{fig:cstar}).

\begin{figure}[h!]
  \centering
  \begin{subfigure}{.45\linewidth}
    \centering
    \includegraphics[width=0.8\linewidth]{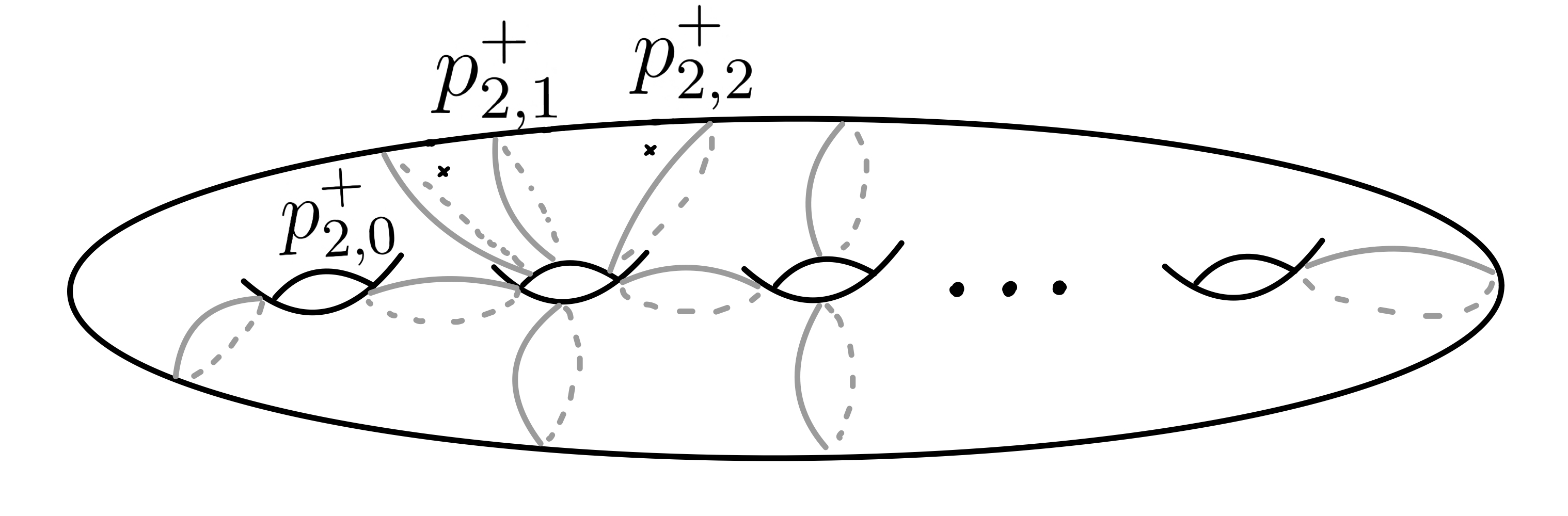}
    \caption{Pants decomposition $P$ for $S_{g,2}$.}
    \label{fig:pant_decomposition_with_puncs}
  \end{subfigure}
  \begin{subfigure}{.45\linewidth}
    \centering
    \includegraphics[width=0.8\linewidth]{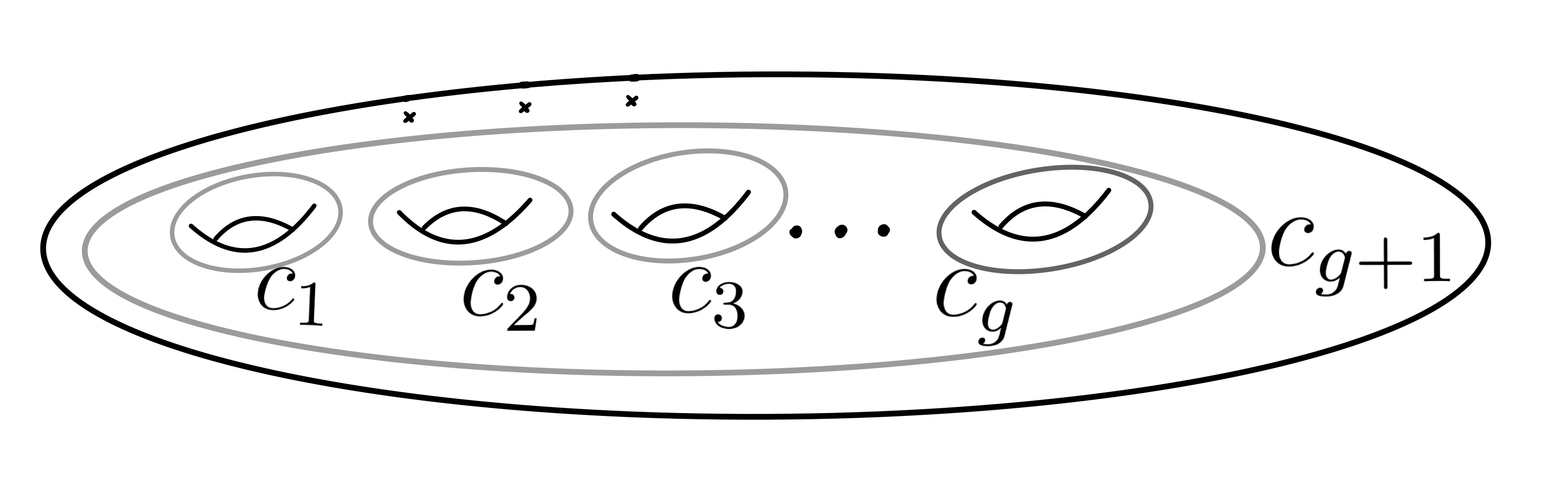}
    \caption{Curves in  $C$ for $S_{g,3}$.}
    \label{fig:c_with_puncs}
  \end{subfigure}\par \medskip

  \begin{subfigure}{.45\linewidth}
    \centering
    \includegraphics[width=0.8\linewidth]{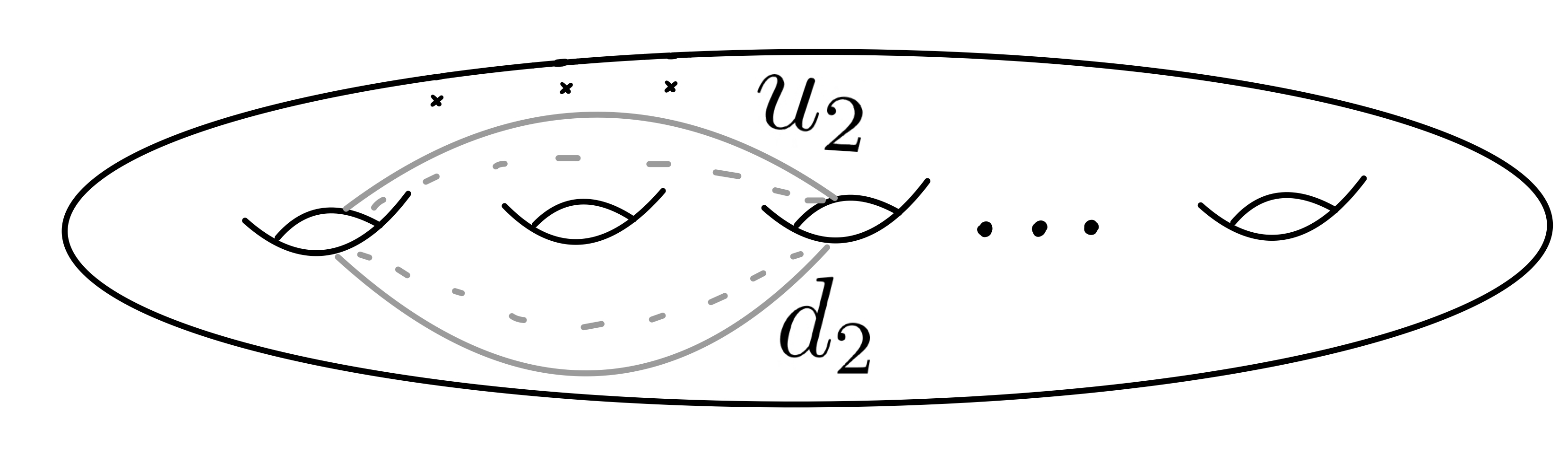}
    \caption{Up and down curves  $u_2, d_2$  for $S_{g,3}$.}
    \label{fig:ctilde_with_puncs}
  \end{subfigure}
  \begin{subfigure}{.45\linewidth}
    \centering
    \includegraphics[width=0.8\linewidth]{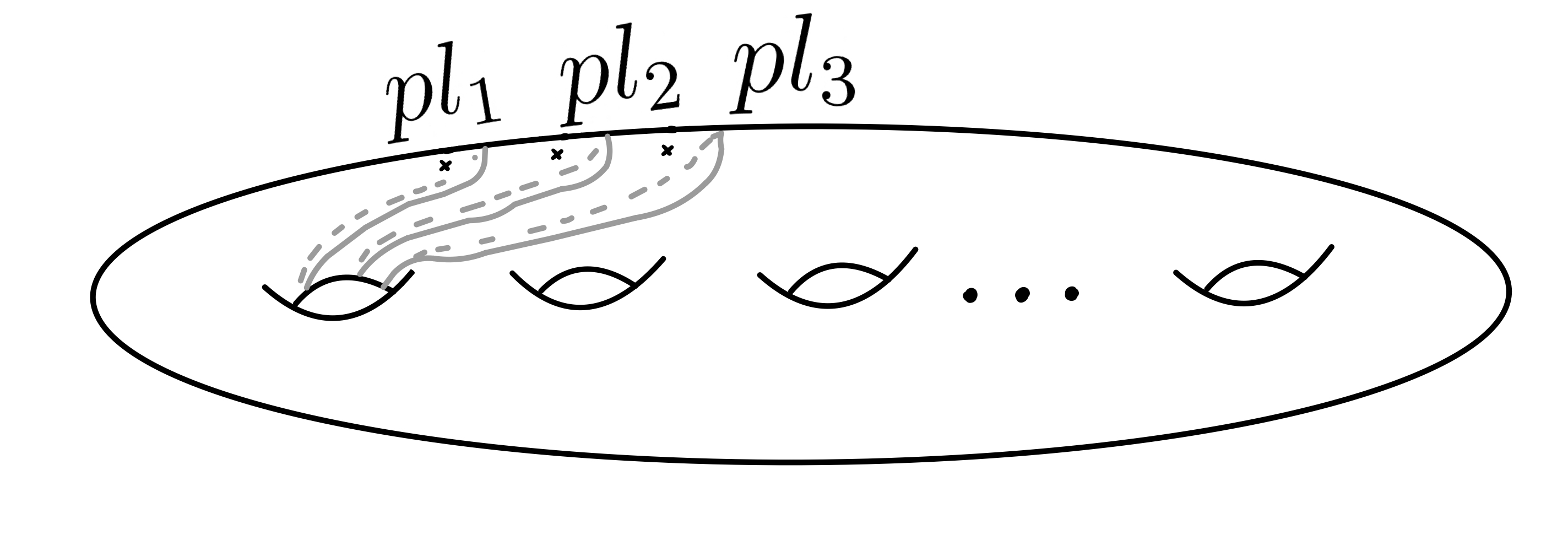}
    \caption{Curves $pl_1,pl_2, pl_3$ for $S_{g,3}$.}
    \label{fig:left_curves_with_puncs}
  \end{subfigure}\par\medskip

  \begin{subfigure}{.45\linewidth}
    \centering
    \includegraphics[width=0.8\linewidth]{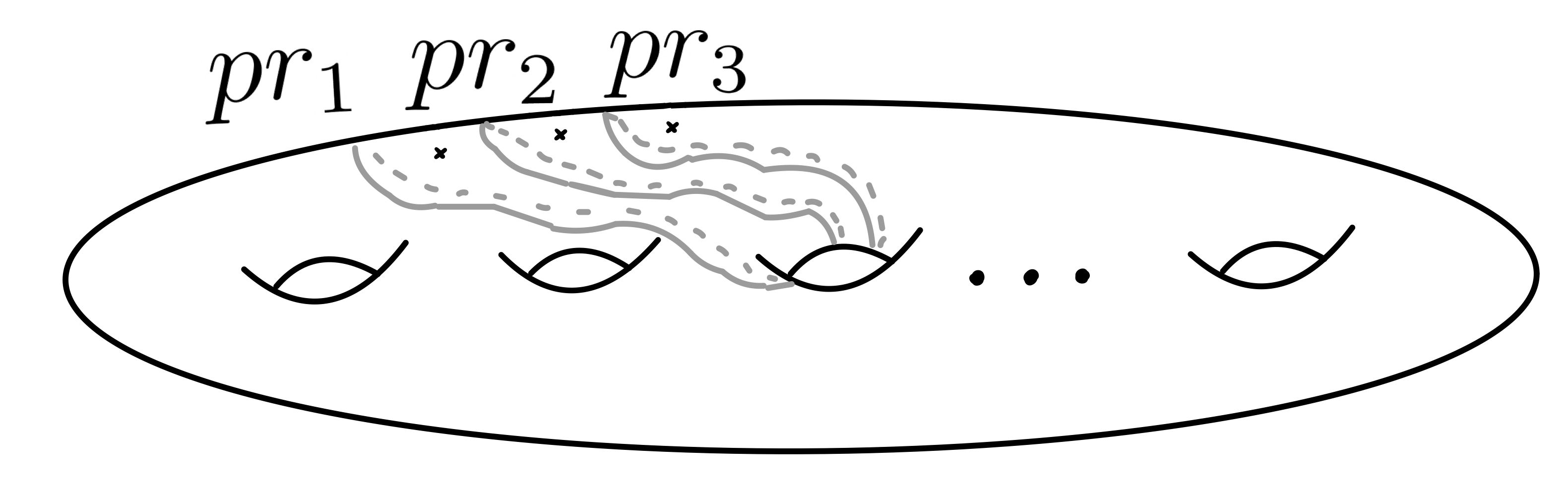}
    \caption{Curves $pr_1,pr_2, pr_3$ for $S_{g,3}$.}
    \label{fig:right_curves_with_puncs}
  \end{subfigure}
  \begin{subfigure}{.45\linewidth}
    \centering
    \includegraphics[width=0.8\linewidth]{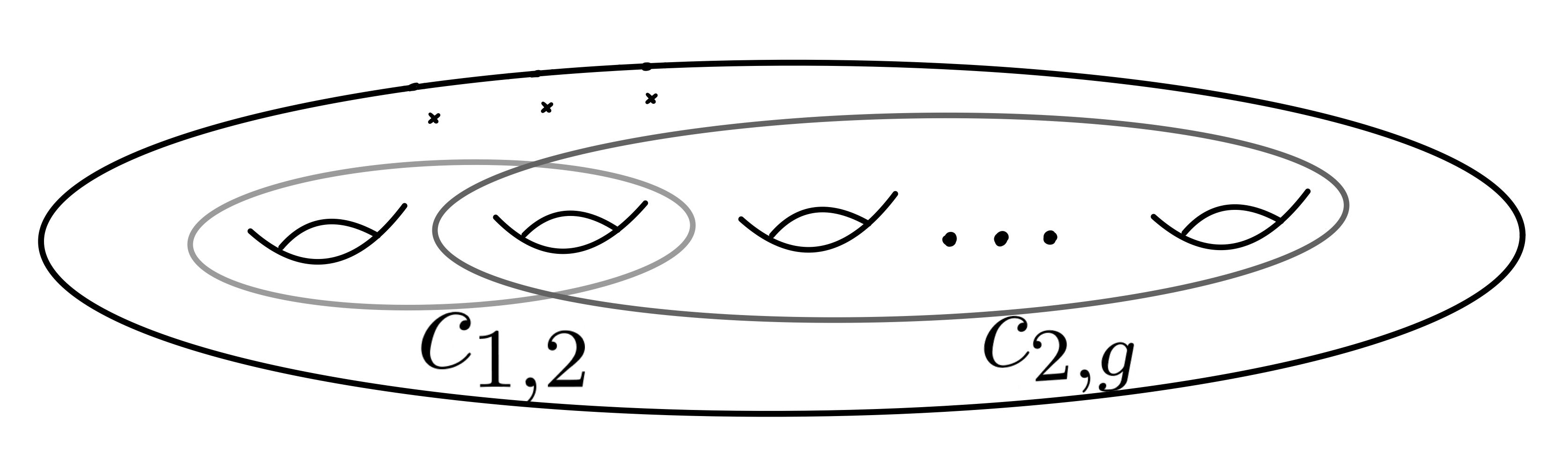}
    \caption{Curves $c_{1,2}$ and $c_{2,g}$ for $S_{g,3}$.}
    \label{fig:cstar}
  \end{subfigure}

  \caption{Curves in $F'_R$. Punctures are marked as small crosses on the top of the surface.}
\end{figure}

The finite rigid set $F'_R$ is the maximal subcomplex of $\nonsep(S)$ spanned by the vertices \[P \cup C\cup U \cup D \cup L\cup R \cup N \cup Pl \cup Pr  \cup C_*.\] 

\begin{remark}
  Note that the subcomplex  $F'_R$  has diameter two. Thus, any \lis map $\phi: F'_R \to \nonsep(S)$ is injective. 
\end{remark}

The following lemmas prove the finite rigidity of $F'_R$. 

\begin{lemma}\label{lemma:fr2_as_in_lemm}
  Let $S$ be a punctured surface of genus $g\geq 3$. For any \lis map $\phi: F'_R \to \nonsep(S)$, there exists $h\in\mcg(S)$ such that $h|_P=\phi|_P$.
\end{lemma}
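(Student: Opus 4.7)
The plan is to mirror the closed-case argument in three steps: first, show that $\phi$ preserves non-adjacency rel to $P$; second, deduce preservation of adjacency rel to $P$; third, invoke Corollary \ref{lemma:fr_top_typ} to produce the mapping class $h$ with $h|_P = \phi|_P$.

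For non-adjacency, I would follow the scheme of Lemma \ref{lemma:compact_non_adj_rel_p}. For each pair $a,b\in P$ that is not adjacent rel to $P$, the goal is to exhibit subsets $A,B\subset P$ with $a\in A$, $b\in B$ and $A\cap B=\emptyset$, together with substitutions $\tilde A,\tilde B\subset F'_R$ such that $\tilde A\cup \tilde B$ substitutes $A\cup B$ in $P$. Lemma \ref{lemma:non_adj} then yields non-adjacency of $\phi(a),\phi(b)$ rel to $\phi(P)$, since all the properties in its hypotheses (disjointness and being a pants decomposition) are preserved by a \lis map. For pairs of curves lying far from the punctures the substitutions built from $L\cup R\cup U\cup D$ in the closed case carry over verbatim. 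For pairs involving the additional pants curves $p_{2,i}^+$ or curves separating off a puncture, the puncture-left set $Pl$ and puncture-right set $Pr$ must supply the required substitutions; this is precisely the purpose for which they were added to $F'_R$. A finite case analysis, analogous to the one at the end of Lemma \ref{lemma:compact_non_adj_rel_p}, should cover all non-adjacent pairs.

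With non-adjacency established, I would mimic Lemma \ref{lemma:compact_adj_rel_p} to obtain adjacency preservation. The idea is to start from an anchor curve in $P$ whose full list of adjacent curves is forced by combining the just-proved non-adjacency with the lower bound from Remark \ref{rmk:at_least_adj_curves}, and then propagate inductively along $P$, determining at each step the topological type of the complementary pairs of pants. Once adjacency rel to $P$ is preserved, Corollary \ref{lemma:fr_top_typ} immediately produces the required $h\in\mcg(S)$.

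The main obstacle will be the adjacency step near the punctures. When $n>1$, Remark \ref{rmk:at_least_adj_curves} only guarantees at least two adjacent curves in $P$, so the simple counting argument of the closed case is no longer enough to pin down the adjacency pattern. The proof has to use the extra curves $Pl$, $Pr$ and the additional pants curves $p_{2,i}^+$ to distinguish pairs of pants containing a puncture from pairs of pants with three curve-boundaries, and to match the \emph{correct} curve of $P$ to each candidate at a puncture. Arranging the induction so that this additional topological information propagates coherently between the closed region and the punctured region of $S$, and verifying that every non-adjacent pair close to a puncture admits substitutions entirely inside $F'_R$, is the delicate part of the argument.
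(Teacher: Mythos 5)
Your plan is exactly the paper's: establish preservation of non-adjacency rel to $P$ via Lemma~\ref{lemma:non_adj} (using $Pl$ and $Pr$ to supply substitutions near the punctures), deduce preservation of adjacency rel to $P$ by a counting argument, and finish with Corollary~\ref{lemma:fr_top_typ}. The one place you flag as delicate but leave open -- pinning down the adjacency pattern when $n>1$ -- is resolved in the paper without any further appeal to $Pl$ or $Pr$: for the interior curves $p_{2,i}^+$ with $1\le i\le n-1$, the lower bound of two from Remark~\ref{rmk:at_least_adj_curves} and the upper bound of two coming from the already-proved non-adjacency coincide, which forces their adjacencies; those adjacencies in turn force $\phi(p_{2,0}^+)$ and $\phi(p_{2,n}^+)$ to bound a subsurface with two boundary curves and $n$ punctures, so each end curve has exactly three adjacent curves, and from there the closed-case propagation of Lemma~\ref{lemma:compact_adj_rel_p} takes over.
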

\begin{proof}
  First, notice that we can extend Lemma \ref{lemma:compact_non_adj_rel_p} to the case of punctured surfaces, and the proof works with minor changes. 

  Lemma \ref{lemma:compact_adj_rel_p} can also be extended to punctured surfaces. The proof for once punctured surfaces is similar to the closed case. Here we prove it for surfaces with  $n>1$ punctures:

  Using Remark \ref{rmk:at_least_adj_curves}, it is straightforward to deduce that the curve $\phi(p_{2,i}^+)$ is adjacent rel to $P$  to $\phi(p_{2,i-1}^-)$ and $\phi(p_{2,i+1})$, for $i\in \{1,\dots, n-1\}$. As a consequence of these adjacencies, we deduce that $\phi(p_{2,0}^+)$ and $\phi(p_{2,n}^+)$ bound a subsurface with two boundary components and $n$ punctures. It follows that $\phi(p_{2,0}^+)$ is adjacent rel to $P$ to at least three curves. On the other hand, the non adjacency rel to $P$ of $\phi(p_{2,0}^+)$ implies that it is adjacent to at most three curves. Thus, $\phi(p_{2,0}^+)$ is adjacent to exactly three curves, namely $\phi(p_{2,1}^+),\, \phi(p_1)$ and $\phi(p_2)$. The rest of the adjacencies follow  just as in the proof of Lemma \ref{lemma:compact_adj_rel_p}.

  Once we know $\phi$ preserves adjacency rel to $P$, we can use Corollary \ref{lemma:fr_top_typ} to produce  a mapping class $h\in \mcg(S)$ with $h|_P=\phi|_P$.
\end{proof}

The next lemma proves that we may choose $h\in \mcg(S)$ to coincide with $\phi$ on all the curves of $F'_R$.

\begin{lemma}\label{lemma:fr2_curve_characterize}
  Let $S$ be a punctured surface of genus $g\geq 3$. For any \lis map $\phi: F'_R \to \nonsep(S)$ there exists $h\in\mcg(S)$ such that $\phi = h|_{F_R'}$.
\end{lemma}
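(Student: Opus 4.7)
The plan is to follow the blueprint of Lemma \ref{lemma:aaa} and the closed-case portion of the proof of Theorem \ref{thm:fr}, while tracking the additional puncture-related curves along the way. First I would apply Lemma \ref{lemma:fr2_as_in_lemm} to obtain $h_0\in\mcg(S)$ with $h_0|_P=\phi|_P$ and relabel $h_0^{-1}\circ\phi$ as $\phi$, so that $\phi$ fixes $P$; since $F'_R$ has diameter two, $\phi$ is injective.

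Next, I would repeat the intersection-detection arguments of Lemma \ref{lemma:aaa} to show that $\phi$ preserves the intersection pattern of each $u_k$ with $p_k^+$, of each $d_k$ with $p_k^-$, and of each $c_k$ with its neighbors in $P$, using substitutions of $P$ by elements of $U\cup D$ together with the dimension bound for $\nonsep(S)$. The puncture-detecting curves $Pl$ and $Pr$ are treated in the same spirit: one uses the subcollection $\{p_{2,0}^+,\dots,p_{2,n}^+\}\subset P$ and disjointness relations in $F'_R$ to force each $pl_i$ and $pr_i$ to intersect exactly the expected curves around the $i$-th puncture, and moreover to pin down, up to homotopy, each arc cut out on a pants piece.

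With these intersections in hand, I would characterize the non-puncture curves one at a time as in Lemma \ref{lemma:aaa}: each $\phi(c_k)$ is the unique non-separating curve in an annulus obtained by cutting its containing subsurface along arcs of neighboring $C$-curves, so a product of Dehn twists along curves in $P$ iteratively yields $\phi(c_k)=c_k$ for $k=1,\dots,g$, and $c_{g+1}$ is then forced by disjointness. Once $P\cup C$ is fixed, each curve in $U\cup D\cup L\cup R$ is pinned down by being the only non-separating curve in an explicit annulus cut out by arcs of $C$ inside a specified subsurface, and the same reasoning applied to the subsurfaces surrounding each puncture characterizes $Pl\cup Pr$.

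Finally, the non-symmetric curves $N\cup C_*$ are handled exactly as in the proof of Theorem \ref{thm:fr} for the closed case: each such curve is confined to a pair of pants whose boundary is already fixed by $\phi$, and by injectivity $\phi$ can only swap each pair $\{\alpha,\iota(\alpha)\}$; a single precomposition with the back-front involution $\iota$ then completes the argument, since everything fixed so far is either invariant under $\iota$ or can be matched up with its $\iota$-image. The main obstacle I anticipate is the bookkeeping near the punctures: one has to check that the Dehn-twist corrections used to fix $C$ do not disturb the characterization of $Pl$ and $Pr$, and that the extra curves in $C_*$ can be separated from $N$ via the pants pieces they occupy, so that a single application of $\iota$ simultaneously fixes everything in $N\cup C_*$.
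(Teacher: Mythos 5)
Your high-level plan correctly identifies the move from Lemma \ref{lemma:fr2_as_in_lemm} and the general strategy of detecting intersections and then pinning down curves one at a time with Dehn twists along $P$, followed by a single precomposition with $\iota$. But there is a concrete gap in your treatment of the circular curves $C$. You propose to fix $\phi(c_k)=c_k$ for $k=1,\dots,g$ first, each time realizing $\phi(c_k)$ as the unique curve in an annulus cut out of its containing subsurface by ``arcs of neighboring $C$-curves,'' and you mention $c_{g+1}$ being forced afterwards. In the punctured setting there is no $c_{g+1}$ in $F'_R$ --- the punctures occupy the region it would traverse, which is precisely why the paper introduces the set $C_*=\{c_{1,2},c_{2,g},\iota(c_{1,2}),\iota(c_{2,g})\}$. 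Consequently, for the end curves $c_1$ and $c_g$, the only neighboring $C$-curve is $c_2$ (resp.\ $c_{g-1}$), and cutting the containing $S_{1,2}$ along a \emph{single} arc does not produce an annulus. You need a second arc, and the paper supplies it from $c_{1,2}$ (resp.\ $c_{2,g}$). This forces $C_*$ into the middle of the argument, not the final clean-up with $N$ as you propose.

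This also affects the order of operations. In the paper one first fixes $U$, $D$, and $\{c_2,\dots,c_{g-1}\}$, which then automatically pins down $Pl$ and $Pr$; only then are $c_1$, $c_g$ characterized using arcs of $c_{1,2}$ and $c_{2,g}$, and finally $c_{1,2}$ itself is characterized inside a subsurface bounded by $\phi(pr_1)$ and a $P$-curve --- so $Pr$ must already be fixed at that point. Your proposed order ($C$ first, then $U\cup D\cup L\cup R$, then $Pl\cup Pr$, then $N\cup C_*$) places $Pl\cup Pr$ after the step where they are needed, and postpones $C_*$ past the step where its arcs are required. The concern you flag about ``bookkeeping near the punctures'' is the right instinct, but the fix is not mere bookkeeping: $C_*$ plays a structural role replacing $c_{g+1}$, and the dependency chain has to be rearranged to $P \to \{U,D,c_2,\dots,c_{g-1}\} \to Pl,Pr \to c_1,c_g \to C_* \to L,R,N$.
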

\begin{proof}
   The idea is  to progressively detect intersections between curves and, by composing with Dehn twists along $P$, construct a mapping class that coincides with $\phi$ on $F'_R$. 
  
  First, by Lemma \ref{lemma:fr2_as_in_lemm}, there exists $h\in \mcg(S)$ such that $h\circ \phi$ fixes $P$ pointwise; rename $h \circ \phi$ as $\phi$.

  Next, using the same arguments as in lemmas \ref{lemma:intersections_1}, \ref{lemma:aux10} and \ref{lemma:detect_intersection_closed}, we detect the following intersections:
  \begin{itemize}
    \item intersection of curves in $U\cup D$ with curves in $P$,
    \item intersections of curves in $Pl$ with curves in $P$,
    \item intersections of curves in $Pr$ with curves in $P$, and
    \item $c_k$ with curves in $P$ for $k\in \{2, \dots,g-1\}$.
  \end{itemize}
  Notice that we can now use the proof of Lemma \ref{lemma:aaa} to produce a mapping class that coincides with $\phi$ on the curves of $U$, $D$ and  $\{c_2,\dots, c_{g-1}\}$. Thus, we may assume these curves are fixed by $\phi$. 
  
It follows that $\phi$ fixes the curves in $Pl$. To see this, note that $\phi(pl_1)$ is a curve contained in the punctured sphere $S'$ bounded by $p_1,\,u_2,\,p_3^+$, and  is disjoint from $p_{2,i}^+$ for $i>0$. Thus, $\phi(pl_1)$ is contained in the once punctured annulus which is a component of $S'\setminus p_{2,1}^+$. As there is only one curve in that annulus that is not already in $P$, it follows that $\phi(pl_1)=pl_1$. In the same way, $\phi(pl_k)$ is contained in the punctured sphere $S'_k$ bounded by $pl_{k-1},\,u_2,\,p_3^+$, and  is disjoint from $p_{2,i}^+$ for $i>k-1$. Thus, $\phi(pl_k)$ is contained in the once punctured annulus which is a component of $S'_k\setminus p_{2,k}^+$, and since there is only one curve in that annulus which is not in $P$ we conclude that $\phi(pl_k)=pl_k$. Naturally, an analogous argument works to prove that $\phi$ fixes every curve in $Pr$.

  Now, note that $\phi$ detects the  following intersections:
  \begin{itemize}
    \item curves in $C_*$ with  curves in $P$,
    \item curves in $\{c_1,c_g\}$ with curves in $P$.
  \end{itemize}
  For instance, consider $c_{1,2}\in C_*$ and $p\in P$ disjoint from $c_{1,2}$. If $\phi(c_{1,2})$ was disjoint from $p$, then $\phi(c_{1,2})$ would have to intersect either the curve  $\phi(c_1),\phi(c_2)$ or $\phi(\iota(c_{2,g}))$, leading to a contradiction. Thus, $\phi(c_{1,2})$ and $p$ intersect. 

  Using the above intersections and fixed curves, we now focus on finding a homeomorphism that agrees with $\phi$ on the curves in $C_*$. 
  
  Observe $\phi(c_1)$ (resp. $\phi(c_g)$)  is  contained in the subsurface $S'=S_{1,2}$ bound by the curves $p_2^-, p_2^+$  (resp. by $p_{g-1}^-,p_{g-1}^+$). Additionally,   since $\phi(c_1)$ is disjoint from the arcs $a_1=S'\cap \phi(c_{1,2})$ and $a_2= S' \cap \phi(c_2)$ (resp. $S'\cap \phi(c_{2,g})$ and $S' \cap \phi(c_{g-1})$), we have that $\phi(c_1)$ (resp. $\phi(c_g)$) is contained in the annulus $S'\setminus (a_1\cup a_2)$. It follows that $\phi(c_1)$ (resp. $\phi(c_g)$) is the unique curve in the annulus. We may  again consider a mapping class $h\in \mcg(S)$ that is a composition of  twists along curves in $P$ such that  $h\circ \phi(c)=c$ for $c\in \{c_1, c_g\}$; we rename $h\circ \phi$ to $\phi$ (the same argument with more details is given in Lemma \ref{lemma:aaa}).

  For $c_{1,2}\in C_*$, note that $\phi(c_{1,2})$  is contained in a subsurface $S' = S_{2,2}$ bound by $pr_1$ and $p^-_3$ (or $p_4$ if $g=3$). Even more, $\phi(c_{1,2})$ is contained in the pair of pants $S'\setminus (c_1\cup c_2 \cup c_3  \cup p_2)$, therefore it is one of the boundary components. One of the boundaries is a separating curve, so either $\phi(c_{1,2})=c_{1,2}$  or $\phi(c_{1,2})=\iota(c_{1,2})$.  These two alternatives are related by the involution $\iota$.  Thus, by precomposing with $\iota$ we can assume $\phi$ fixes $c_{1,2}.$

  It is now easy (but lengthy) to check that the curves in \[\{ \iota(c_{1,2}), \iota( c_{2,g}), nl,nr, nd \} \cup L \cup R\] are fixed by $\phi$. Thus, we have found a mapping class $h\in \mcg(S)$ such that $h\circ \phi$ fixes $F'_R$ and the statement follows.
\end{proof}

We have essentially completed the proof of Theorem \ref{thm:fr}.

\begin{proof}[Proof of Theorem \ref{thm:fr} for surfaces with punctures]
  Let $\phi: F_R' \to \nonsep(S)$ be a \lis map, Lemma \ref{lemma:fr2_curve_characterize} provides the mapping class inducing $\phi$. 
  
  The uniqueness of the mapping class follows as in the closed case.
\end{proof}

\section{Finite rigid exhaustion for punctured surface}\label{sec:exh_puncs}

Let $S$ be a punctured surface of genus $g\geq 3$. In this section we construct a sequence $F'_{1} \subset F'_{2} \subset  \dots \subset \nonsep(S)$ such that each $F'_{i}$ is a finite rigid set with trivial pointwise stabilizer and \[\bigcup_{i=1}^\infty F'_{i} = \nonsep(S).\] 

The strategy to construct the exhaustion is the same as in the closed case (see Section \ref{sec:exh_no_puncs}). First, we are going to enlarge the finite rigid set $F'_R$  to $F'_{1}$ and then use  Lemma \ref{lemma:create_exh} to construct the exhaustion.

\subsection{Enlarging the finite rigid set}

First, we enlarge $F'_R$:

Consider the set of curves $A_1$ and $A_2$ in the closed surface. Via the same procedure described in Section \ref{sec:constructing_fr_puncs}, we remove $n$ points from the interior of the closed surface $S_g$ and so we obtain the punctured surface $S$. This produces natural analogues of the set of curves $A_1$ and $A_2$ in the surface $S$.

We define the set of curves $A_3:=\{a_1,\dots,a_n\}$, where $a_k$ is the unique curve in the torus bounded by $pl_{k-1},\,pr_{k+1}$ and $d_2$, which is disjoint from $c_1,\,c_2,\,c_3$ and $\iota(c_{1,2})$ (see Figure \ref{fig:a3_with puncs}). 

We set $A':=A_1\cup A_2 \cup A_3$.

\begin{figure}[h]
  \centering
  \begin{subfigure}{.45\linewidth}
    \centering
    \includegraphics[width=0.8\linewidth]{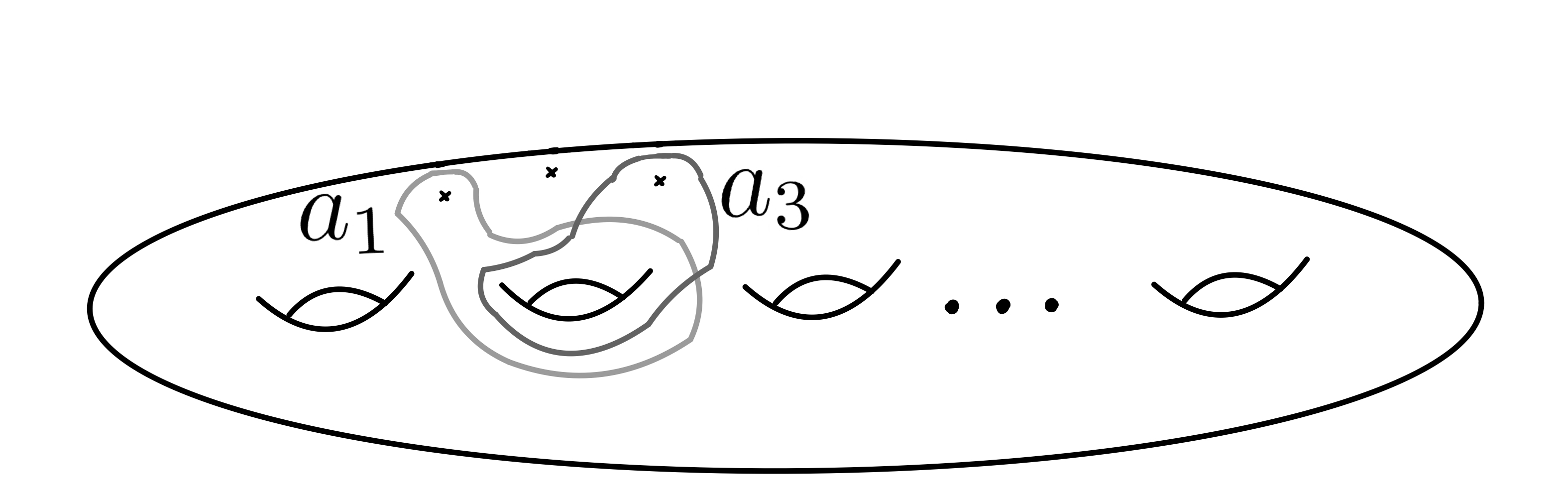}
    \caption{Curves $a_1,a_3$ in $A_3$ for $S_{g,3}$.}
    \label{fig:a3_with puncs}
  \end{subfigure}\par\medskip
\end{figure}

\begin{lemma}\label{lemma:almostlast}
  The set $F'_R \cup A'$ is finite rigid with trivial pointwise stabilizer.
\end{lemma}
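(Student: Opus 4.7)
The plan is to mimic the proof of Lemma~\ref{lemma:enlarge_fr_compact} from the closed case, with the only genuinely new ingredient being the analysis of the curves in $A_3$ around the punctures.

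First, take any \lis map $\phi: F'_R \cup A' \to \nonsep(S)$. By Theorem~\ref{thm:fr} for punctured surfaces, which has already been established in Lemma~\ref{lemma:fr2_curve_characterize}, there is $h \in \mcg(S)$ such that $h \circ \phi$ fixes $F'_R$ pointwise; rename $h \circ \phi$ as $\phi$. Furthermore, direct inspection shows that $F'_R \cup A'$ has diameter two in $\nonsep(S)$, so $\phi$ is automatically injective.

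For the curves in $A_1 \cup A_2$, the argument from the closed case transfers essentially verbatim, since these curves sit away from the punctures. Namely, each $\phi(c_{k,k+1})$ is contained in the pair of pants bounded by $c_k, c_{k+1}$ together with a curve in $P$ (all fixed); it is distinguished from $\phi(\iota(c_{k,k+1}))$ by its disjointness from $nd \in F'_R$; and each $\phi(nl_k), \phi(nr_k)$ is then singled out, by injectivity, as the unique remaining boundary of the appropriate pair of pants whose two other boundaries already lie in $F'_R \cup A_1$. An inductive repetition of this argument fixes all of $A_1 \cup A_2$.

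It remains to fix $A_3$. For each $k \in \{1, \dots, n\}$ the curve $a_k$ is supported in a small subsurface around the $k$-th puncture whose boundary consists of curves already fixed by $\phi$ (namely the appropriate curves in $P \cup Pl \cup Pr$, together with the relevant $p_{2,k}^\pm$ type curves). Inside this subsurface, the arcs cut out by the nearby fixed circular and puncture-related curves divide it into pairs of pants and annuli, in exactly one of which $a_k$ sits as the unique non-separating essential curve. Using injectivity of $\phi$ and the fact that the remaining boundary components in that region already belong to $F'_R$, we conclude that $\phi(a_k) = a_k$. The main (mild) obstacle is bookkeeping: choosing, for each puncture, the right collection of fixed curves from $F'_R \cup A_1 \cup A_2$ whose arcs single $a_k$ out uniquely; once this is done, the identification $\phi(a_k)=a_k$ follows mechanically from injectivity as in the earlier cases.

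Having shown that $\phi$ fixes every vertex of $F'_R \cup A'$, the inclusion $F'_R \cup A' \hookrightarrow \nonsep(S)$ is induced uniquely by the identity in $\mcg(S)$, which gives simultaneously finite rigidity and trivial pointwise stabilizer, completing the proof.
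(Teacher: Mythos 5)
Your overall strategy matches the paper's: reduce to the case where $\phi$ fixes $F'_R$, then show the new curves in $A'$ are fixed. But there is a genuine gap in how you treat $A_1$, and it is not a cosmetic one.

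You assert that the closed-case argument for $A_1$ transfers ``essentially verbatim, since these curves sit away from the punctures.'' This is false, and it is exactly the reason the paper builds $C_*$ into $F'_R$ in the first place. The punctures are placed in the region where $p_2^+$ lives (hence the chain $p_{2,0}^+,\dots,p_{2,n}^+$), which is precisely the region swept out by a regular neighborhood of $c_1\cup p_2\cup c_2$; the curve $c_{1,2}\in\partial(c_1,p_2,c_2)$ therefore runs right past the punctures. In the closed case one pins down $\iota(c_{1,2})$ by cutting along $c_1,c_2,c_3,p_2,p_3^+,p_3^-$ and using disjointness from $nd$, and this relies on the resulting component being an honest pair of pants. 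In the punctured case the corresponding complementary region is no longer a pair of pants, so the closed-case argument does not apply for the low-index $A_1$ curves. The paper's fix is twofold: $c_{1,2}$ and $\iota(c_{1,2})$ are already elements of $C_*\subset F'_R$, hence automatically fixed once $\phi$ fixes $F'_R$; and for $c_{2,3}$ one must cut along a bespoke collection of curves drawn from $Pl$, $C$, $P$ and $C_*$ (including $\iota(c_{1,2})$ and $\iota(c_{2,g})$) to trap $\phi(c_{2,3})$ in a pair of pants. Only for $k\geq 3$ does the closed-case induction kick in. Your proposal never mentions $C_*$ at all, and the ``pair of pants bounded by $c_k,c_{k+1}$ together with a curve in $P$'' that you describe is not the configuration used in either case (those three curves do not even bound a pair of pants); this suggests you would not be able to carry out the $A_1$ step near the punctures.

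The $A_3$ step is also left as ``mild bookkeeping,'' but the collection you would actually need to cut along (the paper uses one involving $\iota(c_{1,2})$ and $\iota(c_{2,3})$ from $A_1$, together with puncture-adjacent curves and curves from $C$ and $P$) has to be exhibited explicitly, since it is what forces $\phi(a_k)$ into a pair of pants with two boundary components already in the domain. Your statement that the boundary of the relevant subsurface consists of curves from $P\cup Pl\cup Pr$ (with ``$p_{2,k}^\pm$ type curves'') misses the dependence on the previously-fixed $A_1$ curves. In short: the global structure of your proof is right, but the two places where punctures genuinely change the geometry --- the $A_1$ curves near the punctures, and the role of $C_*$ --- are exactly the places where your argument is either wrong or silent.
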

\begin{proof}
  Let $\phi:F'_R \cup A' \to \nonsep(S)$ be a \lis map. By precomposing with a mapping class we may assume that $\phi$ fixes $F'_R$ pointwise. 

  First, we prove that $A_1$ is also fixed by $\phi$. The curves $c_{1,2}, \iota( c_{1,2}) \in A_1$ are already in $C_* \subset F_R'$, so they are fixed by $\phi$. For $c_{2,3} \in A_1$, notice that $\phi(c_{2,3})$ is contained in the sphere $S'$  bounded by $p_1, \,pl_n,\, c_2,\,c_3,\,p_4^+$ and $p_4^-$. Moreover, $\phi(c_{2,3})$ is contained in the pair of pants $S' \setminus c_1 \cup p_3 \cup \iota(c_{2,g})$. But there is only one curve in that pair of pants that is non-separating, i.e,  $\phi(c_{2,3})=c_{2,3}$. Slight modifications yield that $\phi(\iota(c_{2,3}))=\iota(c_{2,3})$. For the rest of $A_1$,  one can proceed as in the closed case (see the proof of Lemma \ref{lemma:enlarge_fr_compact}).

  To prove $A_2$ is fixed, we can just repeat the argument as in the closed case (see Lemma \ref{lemma:enlarge_fr_compact}).

  To finish the proof we must show that $a_k \in A_3$ is  fixed. To check this, note that $\phi(a_k)$  is contained in the torus $T$ bounded by  $pl_{k-1}, \,pr_{k+1}$ and $d_2$. Note that $\phi(a_k)$ is the unique curve in $T\setminus (c_1 \cup c_2\cup c_3 \cup \iota(c_{1,2}))$ that is not $c_2$, that is, $\phi(a_k)=a_k$. Thus, $A'$ is fixed and $F'_R \cup A'$ is  a finite rigid set  with trivial pointwise stabilizer.
\end{proof}

We  define $F'_{1} := F'_R \cup A'$.

\subsection{Constructing the exhaustion for punctured surfaces}

The goal of this section is to construct an exhaustion of $\nonsep(S)$ by finite rigid sets with trivial pointwise stabilizers. In this direction, we will consider a set of generators of $\mcg(S)$ such that the subcomplex $F_1'$ satisfies hypotheses of Lemma \ref{lemma:create_exh}. We are going to assume $S=S_{g,n}$ with $n>0$ punctures.

Let $\iota$ be the back-front orientation reversing involution. Consider the usual Humphries generators \[ H' = \{\delta_{p_1}, \dots, \delta_{p_{g-1}}, \delta_{p_2^-}\} \cup \{\delta_{p_{2,i}^+}|\; 0\leq i \leq n\} \}\cup \{\delta_{c_1}, \dots, \delta_{c_k}\}\] and the half twists \[ \{h_{(k,k+1)} | 1\leq k \leq n-1\},\] where $h_{(k,k+1)}$ is the half twist that permutes the puncture $k$ with the puncture $k+1$. It is well known that  $H'\cup \{\iota\} \cup \{h_{(k,k+1)} | 1\leq k \leq n-1\}$ generates $\mcg(S)$ (see \cite[Chapter 4.4.4]{farb_primer_2012}).

\begin{lemma}\label{lemma:last}
  For every $h\in H'\cup \{\iota\} \cup \{h_{(k,k+1)} | 1\leq k \leq n-1\}$, the set $F'_{1}\cup h ( F'_{1})$ is finite rigid with trivial pointwise stabilizer.
\end{lemma}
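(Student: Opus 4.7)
The plan is to mirror the closed-surface argument of Lemma \ref{lemma:fr1_as_in_lemm} case by case. For any generator $h$ in the given set, let $\phi : F'_{1} \cup h(F'_{1}) \to \nonsep(S)$ be a locally injective simplicial map. Since $F'_{1}$ is finite rigid with trivial pointwise stabilizer (Lemma \ref{lemma:almostlast}), we may precompose with a mapping class so that $\phi|_{F'_{1}} = \mathrm{id}$. Applying finite rigidity to $h(F'_{1})$ (which is just a relabelled copy), there is $f \in \mcg(S)$ with $\phi|_{h(F'_{1})} = f|_{h(F'_{1})}$. Proving the lemma reduces to showing $f = 1 \in \mcg(S)$.

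For $h = \delta_\gamma$ a Humphries generator in $H'$, the proof proceeds exactly as in the closed case. The curves in $F'_{1} \cap h(F'_{1})$ are fixed by $f$ and fill $S \setminus \gamma$, so the Alexander method together with the cutting homomorphism
\[ 1 \to \langle \delta_\gamma \rangle \to \mcgo(S,\gamma) \to \mcgo(S\setminus\gamma) \]
force $f = \delta_\gamma^k$ for some $k \in \mathbb{Z}$. As in Equation (\ref{eq:intersect_0}), I would exhibit an explicit pair $(a,b)$ of curves in $F'_{1}$ satisfying $i(a, h(b)) = 0$, so that local injectivity yields $i(a, \delta_\gamma^{k+1}(b)) = 0$; inspecting intersection patterns then forces $k = 0$. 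The relevant pairs for twists along $p_1, \ldots, p_{g-1}, p_2^-$ and the circular curves are the same as in the closed case; for the new generators $\delta_{p^+_{2,i}}$ I would use pairs involving the curves $a_k \in A_3$ and the puncture curves $Pl, Pr$, since precisely these curves in $F'_{1}$ cross $p^+_{2,i}$. The case $h = \iota$ is immediate: $F'_{1} \cap \iota(F'_{1})$ already contains the $\iota$-symmetric subcomplex spanned by $P \cup C \cup Pl \cup Pr \cup A_1 \cup A_3$, which (by the proofs of Lemmas \ref{lemma:fr2_curve_characterize} and \ref{lemma:almostlast}) has trivial pointwise stabilizer, hence $f = 1$.

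The main obstacle is the half-twist case $h = h_{(k,k+1)}$, which requires a modified cutting argument. Let $\alpha_{k,k+1}$ denote the curve bounding the twice-punctured disk containing the $k$-th and $(k+1)$-th punctures; this curve is realised by (or is a boundary of a regular neighbourhood of) an element of $F'_{1}$ built from the $Pl, Pr, A_3$ families. The curves of $F'_{1} \cap h(F'_{1})$ fill the complement of this disk, so after cutting along $\alpha_{k,k+1}$ the image of $f$ in $\mcg$ of the outer component is trivial by Alexander; its image in the mapping class group of the twice-punctured disk lies in the infinite cyclic group generated by the half twist (since the disjoint boundary is fixed). Hence $f = h_{(k,k+1)}^{m} \delta_{\alpha_{k,k+1}}^{j}$ for some $m,j \in \mathbb{Z}$. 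To pin down $m$ and $j$, I would use two explicit disjointness relations among curves in $F'_{1} \cup h(F'_{1})$: one involving a curve (such as $a_k \in A_3$) whose isotopy class distinguishes different powers of the half twist, and another whose isotopy class detects the Dehn twist along $\alpha_{k,k+1}$. Verifying these two relations carefully—in particular that they genuinely force $m = j = 0$—is the delicate point of the argument, but is a finite calculation analogous to the closed-surface verification following Equation (\ref{eq:intersect_0}). Once this is done, Lemma \ref{lemma:create_exh} together with Lemma \ref{lemma:almostlast} immediately yields Theorem \ref{thm:exh} for punctured surfaces.
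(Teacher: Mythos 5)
Your proposal follows the paper's approach closely, and the overall plan is correct. A few comments on the half-twist case, which you rightly flag as the only genuinely new point.

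First, a small redundancy: once you have cut along $\alpha_{k,k+1}$ and shown via the Alexander method that $f$ is supported on (a neighborhood of) the twice-punctured disk, the mapping class group of that disk rel boundary is infinite cyclic, generated by the half twist $h_{(k,k+1)}$, with $\delta_{\alpha_{k,k+1}} = h_{(k,k+1)}^{\pm 2}$. So $f = h_{(k,k+1)}^m$ for a single integer $m$; the extra factor $\delta_{\alpha_{k,k+1}}^j$ in your decomposition $f = h_{(k,k+1)}^m\delta_{\alpha_{k,k+1}}^j$ is absorbed into the power of the half twist, and only one disjointness relation is needed, not two. The paper treats this case quite tersely — it simply records that plugging the pair $(a_k, r_k)$ into the intersection condition analogous to Equation (\ref{eq:intersect_0}) forces $m=0$ — and implicitly relies on exactly the cutting argument you spell out; your more explicit account of why $f$ must be a power of the half twist is actually a useful clarification of what the paper leaves to the reader. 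For the new Dehn-twist generators $\delta_{p_{2,j}^+}$ the paper uses the pairs $(l_j, a_j)$ for $j\le n-1$ and $(r_n, a_n)$, which is in the same spirit as your suggestion to use curves from $A_3$ together with the auxiliary left/right families. Your treatment of $h=\iota$ matches the paper's: $F_1'\cap\iota(F_1')$ contains a subcomplex (already $P\cup C\cup A_1$ suffices) with trivial pointwise stabilizer, forcing $f=1$.
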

\begin{proof}
  The proof is analogous to the closed case (see Lemma \ref{lemma:fr1_as_in_lemm}) and works directly for $h=\iota$, $h = \delta_{p_j}$, $h=\delta_{c_j}$ and $h =\delta{ {p_2}^-}$.  We consider here the rest of the cases:

  Recall that given $h=\delta_x$ and $\phi: F'_{1}\cup h(F'_{1}) \to \nonsep(S)$, we can assume that $\phi$ fixes $F'_{1}$ and $\phi|_{h(F_{1})}=\delta_x^k|_{h(F_{1})}$. Thus, the proof is a matter of checking that $k=0$. 
  
  For $h=\delta_{p_{2,j}^+}$ with $j\leq n-1$, we can consider the curves $l_j, a_j$ and plug them into  Equation (\ref{eq:intersect_0}). It follows that this is satisfied if and only if $k=0$. For $h= \delta_{p_{2,n}^+}$, one uses the curves $r_n, a_n$ in the same equation.

  Th last case to prove is $h=h_{(k-1,k)}$, which can be proved using the curves $a_{k},r_k$ in Equation (\ref{eq:intersect_0}).
\end{proof}

We now complete the proof of Theorem \ref{thm:exh}.

\begin{proof}[Proof of Theorem \ref{thm:exh} for punctured surfaces]
  The Lemmas \ref{lemma:almostlast} and \ref{lemma:last} ensure that $F'_{1}$ satisfies the hypothesis of Lemma \ref{lemma:create_exh}, which in turn produces the desired exhaustion.
\end{proof}

\printbibliography
\nocite{*}

\end{document}